\newcommand{\disp}{\displaystyle}
\numberwithin{equation}{section}
\newtheorem{theorem}{Theorem}
\newtheorem{lemma}[theorem]{Lemma}
\newtheorem{proposition}[theorem]{Proposition}
\newtheorem{corollary}[theorem]{Corollary}
\newtheorem{definition}[theorem]{Definition}
\theoremstyle{remark}
\newtheorem{remark}[theorem]{Remark}
\numberwithin{theorem}{section}
\newcommand{\IND}{\mathbf{1}}
\newcommand{\N}{\mathbb{N}}
\newcommand{\bes}{\begin{equation*}}
\newcommand{\ees}{\end{equation*}}
\newcommand{\beas}{\begin{eqnarray*}}
\newcommand{\eeas}{\end{eqnarray*}}
\newcommand{\bea}{\begin{eqnarray}}
\newcommand{\eea}{\end{eqnarray}}
\newcommand{\be}{\begin{equation}}
\newcommand{\ee}{\end{equation}}
\newcommand{\bbl}{\begin{block}}
\newcommand{\ebl}{\end{block}}
\newcommand{\ve}{\varepsilon}
\newcommand{\cX}{\Gamma}
\newcommand{\cXt}{\tilde \cX}
\newcommand{\facn}{\lfloor n \rfloor}
\newcommand{\fac}{\lfloor 2 \rfloor}
\newcommand{\BOP}{\mathfrak{S}}
\newcommand{\ADP}{\mathfrak{A}}
\newcommand{\ADPa}{\mathfrak{A}^c}
\newcommand{\CS}{\mathcal{M}^\cdot(\cX)}
\newcommand{\CSt}{\mathcal{M}^\cdot(\tilde\cX)}
\newcommand{\CSf}{\mathcal{M}^\cdot_f(\cX)}
\newcommand{\Prob}{\mathcal{P}}
\newcommand{\Camp}{\mathcal{C}}
\newcommand{\R}{\mathbb{R}}
\newcommand{\B}{\mathfrak{M}_1}
\newcommand{\cc}{\mathfrak{a}}
\newcommand{\PerOU}{P_{\nu,c}^{OU}}
\newcommand{\poi}{\mathfrak{poi}}
\newcommand{\probap}{\mathfrak{p}}
\newcommand{\probaq}{\mathfrak{q}}
\newcommand{\Prho}{\mathbf{P}_{\hspace{-0.8mm}\rho}}
\newcommand{\PP}{\mathbf{P}}
\newcommand{\gbeta}{g_{\hspace{-0.3mm}\beta}}
\newcommand{\Pnu}{{\mathbb{P}_\nu}}
\newcommand{\Pnuxy}{{\mathbb{P}_\nu^{x,y}}}
\newcommand{\Pnuxx}{{\mathbb{P}_\nu^{x,x}}}
\newcommand{\Pnun}{{\mathbb{P}_{\nu^n}}}
\newcommand{\Enu}{{\mathbb{E}_\nu}}
\newcommand{\Prhoc}{\mathbf{P}^\mathfrak{a}_{\hspace{-0.8mm}\rho}}
\newcommand{\EPrhoc}{\mathbf{E}_{\mathbf{P}^\mathfrak{a}_{\hspace{-0.8mm}\rho}}}
\newcommand{\EPrhoo}{\mathbf{E}_{\mathbf{P}^0_{\hspace{-0.8mm}\rho}}}
\newcommand{\K}{\mathcal{K}}
\date{January 23, 2018}
\title{Conditioned point processes \\ with application to L\'evy bridges}
\author[1]{Giovanni Conforti\thanks{giovanni.conforti@polytechnique.edu}}
\author[2]{Tetiana Kosenkova\thanks{kosenkova@math.uni-potsdam.de}}
\author[3]{Sylvie Roelly\thanks{roelly@math.uni-potsdam.de}}
\affil[1]{\small D\'{e}partement de math\'{e}matiques appliqu\'{e}es, \'Ecole Polytechnique, Universit\'e Paris-Saclay. Route de Saclay, 91128 Palaiseau, France}
\affil[2,3]{\small Institute for Mathematics, University of Potsdam, Karl-Liebknecht-Strasse 24-25, 14476 Potsdam, Germany}
\begin{document}
\maketitle
%%%%%%%%%%%%%%%%%%%%%%%%%%%%%%%%%%%
\begin{abstract}
  Our first result concerns a characterisation by means of  a functional equation of Poisson point processes conditioned by the value of their first moment. It leads to a generalised version of Mecke's formula. {\it En passant}, it also allows to gain quantitative results about stochastic domination for Poisson point processes under linear constraints. \\
Since bridges of a pure jump L\'evy process in $\R^d$ with a height $\cc$ can be interpreted as a Poisson point process on space-time conditioned by pinning its first moment to $\cc$, our approach allows us to characterize bridges of L\'evy processes by means of a functional equation.
The latter result has two direct applications:
first we obtain a constructive and simple way to sample L\'evy bridge dynamics; second it allows to estimate the number of jumps for such bridges. We finally show that our method remains valid for linearly perturbed L\'evy processes like periodic Ornstein-Uhlenbeck processes driven by L\'evy noise.
\end{abstract}
\vspace{0.5cm}
\noindent \textbf{2010 Mathematics Subject Classification.} 60G55, 60G51, 60H07, 60J75.

\smallskip
\noindent \textbf{Keywords.} Conditioned point processes, Mecke's formula, L\'evy bridges, periodic Ornstein-Uhlenbeck.

%%%%%%%%%%%%%%%%%%%%%%%%%%%%%%%%%%%
%%%%%%%%%%%%%%%%%%%%%%%%%%%%%%%%%%%
\section{Introduction and notations}
%%%%%%%%%%%%%%%%%%%%%%%%%%%%%%%%%%%%
In this paper we first consider Poisson point processes conditioned to satisfy linear constraints.
 As we will see later, they arise quite naturally in various situations, when studying bridges of L\'evy processes or periodic Ornstein-Uhlenbeck processes. What makes their study mathematically interesting (and intricate) is the fact that, in contrast with the Gaussian case where linear conditionings preserve Gaussianity, linear conditionings of Poisson point processes are no longer Poissonian. We propose a characterization of these conditional laws in Theorem \ref{th:main}
%(and of their mixtures)
through the functional equation (*) which can be seen as a generalized version of the celebrated iterated Mecke identity. \\
Recall that Mecke's formula %on Campbell measures
quantifies how much adding or removing a point from a random point configuration affects its probability. In our formula, indeed, we balance the cancellation and addition of points in such a way that the constraint is preserved. Let us precise our approach. Consider a Poisson point process $\PP(d\mu)$ on $\R$ under the linear constraint that the first moment $\B(\mu):= \int_\R x \mu(dx)$ of any point configuration $\mu$ is fixed to be equal to $\cc$.
%equal to $b \in \R$,
To analyse the conditioned probability $\PP(d\mu \,|\B=\cc)$ we introduce an integro-difference operator on point measures $\mu$, which cancels a randomly chosen point $x$ of the support of $\mu$ and create two new points at places $x'$ and $x''$ whose sum $x' + x''$ equals that of the removed one, $x$. Therefore the first moment of the transformed point measure remains unchanged, equal to $\B(\mu)$. \\
%under this transformation of $\mu$.
Identity (*) will also be used to dominate stochastically with a Poisson random variable the law of the total mass of a Poisson point process conditioned by its first moment. This result is of particular interest since these conditioned laws cannot be computed in explicit form. Our result furnishes upper- or lower-bounds. \\

The main purpose of our study is presented in Section \ref{sec:3}. Considering a pure jump process as a point measure on a space-time set, we transpose our previous results in order to obtain a characterization of  bridges of pure jump L\'evy processes as the unique solutions of a functional equation. Indeed the former constraint on the first moment $\B$ corresponds in this context to fix the global size of the jumps of a path, or equivalently the height of the bridge.\\
Notice that in Equation \eqref{e7} which characterizes the set of pure jump processes having the same bridges than a given pure jump L\'evy processes $\Pnu$, a parameter $\chi_\nu$ appears,  called reciprocal characteristic. This bivariate function is computed from the diffuse jump measure $\nu$ of $\Pnu$ and encodes all the necessary information to construct the bridges.
In this respect, our result extends to the case of diffuse jumps the study of bridges of jump process  which so far was limited to discrete jump measures or random walks on graphs, see \cite{CDPR,CR,CL15}. Furthermore, following a first quantization strategy outlined in \cite{hairer2005analysis,hairer2007analysis}, our characterization can be used to construct a dynamics whose invariant law is a L\'evy bridge, see Subsection \ref{subs:simulation}.\\

The paper is organized as follows.
In Section 2 we exhibit in Theorem \ref{th:main} a characterization formula for Poisson point processes conditioned by their first moment. In particular, we deduce from that explicit stochastic comparisons results.
In Section 3 we apply our former characterization to bridges of pure jump L\'evy processes, whereas in Section 4
we apply them to the study of periodic Ornstein-Uhlenbeck processes driven by a L\'evy process.\\

Let us now introduce some useful notations which will appear in the paper.
\begin{itemize}
%\item The state space $\cX$ is in this draft equal to $\RD$. Its  group properties are used.
%$\mathcal{B}(\cX)$, the set of bounded measurable functions.
\item On a measured state space $\cX$ we consider $\CS$ (resp. $\CSf$), the set of point measures (resp. finite point measures) over $\cX$. \\
If a point $\gamma \in \cX$ belongs to the set of atoms of $\mu \in \CS$ we simply write $\gamma \in \mu$. Therefore, if $\mu $ is not reduced to the zero measure, denoted by $\underline{0}$,  $\mu = \sum_{\gamma\in \mu} \delta_\gamma$.
\item $\Prob (X)$ is the set of probability measures on a space $X$.
In particular
$$\poi_\lambda \in \Prob(\N)$$
 is the law of a
%$\N$-valued
Poisson random variable with mean $\lambda >0$ and
$$
\Prho \in \Prob(\CS)
$$
 denotes
 the Poisson point process on  $\cX$ of intensity $\rho(d\gamma)$, where $\rho$ is a positive %$\sigma$-
finite measure on $\cX$.
% =\rho(\gamma) d\gamma
%\item %$\EMU$: support of $\mu$, i.e.
%\item $\vert \mu \vert $ is the cardinality of $\EMU$.
\item For any point measure $\mu \in \CS $,  its $n^{th}$ factorial product, $n \in \N^*$, is defined as the point measure on the product space $\cX^{\otimes n}$ given by
\be\label{e0}
\mu^{\lfloor n \rfloor} (d\gamma_1,\cdots, d\gamma_n) := \mu(d\gamma_1) (\mu- \delta_{\gamma_1})(d\gamma_2)\cdots
(\mu- \cdots -\delta_{\gamma_{n-1}})(d\gamma_n)
\ee
In other words (see e.g. \cite[p. 70]{daley2007introduction} )
$$
(\gamma_1, \cdots, \gamma_n) \in \mu^{\facn} \Leftrightarrow \forall i, \gamma_i \in \mu \textrm{ and } \not \exists i,j \textrm{  such that }\  \gamma_i = \gamma_j .
$$
In particular, since the point measure $\mu^{\fac}$ on  $\cX^{\otimes 2}$ satisfies
\be\label{e1}
\mu^{\fac}:= \sum_{\gamma,\gamma' \in \mu} \delta_{(\gamma,\gamma')} - \sum_{\gamma \in \mu} \delta_{(\gamma,\gamma)} ,
\ee
its support is the product of the support of $\mu$ with itself minus the diagonal.
%\item We denote by $\Gamma$ the space $[0,1] \times \cX$.
\end{itemize}

%%%%%%%%%%%%%%%%%%%%%%%%%%%%%%%%%%%%%%%%%%%%%%%%%%%%%%%%%%%
%%%%%%%%%%%%%%%%%%%%%%%%%%%%%%%%%%%%%%%%%%%%%%%%%%%%%%%%%%%
\section{Splitting and conditioning a Poisson point process}
%%%%%%%%%%%%%%%%%%%%%%%%%%%%%%%%%%%%%%%%%%%%%%%%%%%%%%%%%%%%

%%%%%%%%%%%%%%%%%%%%%%%%%%%%%%%%%%%%%%%%%%%%%%%%%%%%%%%%%%%%%%%%%%%%%
\subsection{
Mecke bivariate formula as tool to characterize a Poisson point process}
%%%%%%%%%%%%%%%%%%%%%%%%%%%%%%%%%%%%%%%%%%%%%%%%%%%%%%%%%%%%%%%%%%%%%%

Let us shortly recall in this subsection how useful (reduced) Campbell measures are to characterize a Poisson point process.

First define on the product space $\Gamma \times \CS$ the map $\varsigma_+$ which adds an atom to a point  measure:
\be \label{def:addapoint}
\forall (\gamma,\mu)  \in \Gamma \times \CS, \quad \varsigma_+(\gamma,\mu) := (\gamma,\mu + \delta_\gamma)
\ee
Its inverse map $\varsigma^-$ is only defined on the set  $\{(\gamma,\mu) : \gamma \in \mu\} \subset \Gamma \times \CS$. It cancels one atom of a point measure:
\be \label{def:cancelapoint}
\varsigma_-(\gamma,\mu) := (\gamma,\mu - \delta_\gamma) .
\ee
Let us also introduce the bivariate version of $\varsigma_+$ corresponding to the addition of two atoms to a  point measure:
\be \label{def:add2points}
\forall (\gamma,\gamma',\mu)  \in \Gamma^2 \times \CS, \quad \varsigma_+^{(2)}(\gamma,\gamma',\mu) := (\gamma,\gamma',\mu + \delta_\gamma + \delta_{\gamma'}).
\ee
On the other side, the cancellation of two atoms of a point measure $\mu$ is defined and denoted as follows:
\be \label{def:cancel2points}
\textrm{ for }\gamma,\gamma' \in \mu, \gamma \not = \gamma' , \quad \varsigma^{(2)}_-(\gamma,\gamma',\mu) := (\gamma,\gamma',\mu - \delta_\gamma - \delta_{\gamma'}).
\ee

\begin{definition}[First order Campbell measures]
For any point process $Q$ on $\cX$, its one-to-one associated Campbell measure $\Camp_Q$ {\em (}resp. reduced Campbell measure $\Camp_Q^!${\em )} is defined as the following measure on $\Gamma \times \CS$:
$$
\Camp_Q (d\gamma,d\mu):= \mu(d\gamma) Q(d\mu) \quad \textrm{ resp.}
\quad \Camp_Q^! (d\gamma,d\mu):= \Camp_Q \circ (\varsigma_-)^{-1} (d\gamma,d\mu).
$$
\end{definition}

The celebrated Slivnjak-Mecke characterization offers an elegant identification of Poisson point processes via their Campbell measure. For any $\rho$, positive finite measure on $\cX$,
% $\Prho$ through a formula satisfied by its (reduced) Campbell measure (see \cite{Sli62}, \cite{Mec67}):
%It is the unique point process $Q$ satisfying the following equivalent identities:
\be \label{eq:Mecke}
Q = \Prho \quad \Leftrightarrow \quad \Camp_Q^! = \rho \otimes Q \quad
\Leftrightarrow \quad \Camp_Q = \big(\rho \otimes Q \big) \circ (\varsigma_+)^{-1} .
\ee

\begin{remark}
For any $\gamma \in \cX$, denote by  $\Delta_\gamma=\delta_{\delta_\gamma}$ the degenerated (deterministic) point process concentrated on $\delta_\gamma \in \CS$. The latter identities \eqref{eq:Mecke} can be rewritten as
$$
\Camp_Q = \big(\rho(d\gamma) \Delta_\gamma (d\mu)\big) \star Q
$$
where $\star$ denotes the following generalized convolution between a measure $C$ on $\Gamma \times \CS$ and a point process $Q$ on $\Gamma$:
for any measurable positive test functions $F(\gamma,\mu)$ on $\Gamma \times \CS$,
$$
 \int F(\gamma,\mu) \, C \star Q \, (d\gamma,d\mu):= \int \int F(\gamma,\mu + \nu) \, C(d\gamma,d\nu)Q(d\mu).
$$
A generalisation of this equation in $Q$, where the measure $\rho(d\gamma) \Delta_\gamma (d\mu)$ is replaced by a more complicated one, is the subject of a recent study, see e.g. \cite{nehring2016splitting}.
\end{remark}
Iterating the above procedure, one can define Campbell measures with second (and higher) order, see e.g.
%\cite{matthes1979bemerkungen},
\cite[Eq. (15.6.14)]{daley2007introduction}.
\begin{definition}[Second-order Campbell measures]
For any point process $Q$ on $\cX$, one defines the second-order factorial Campbell measure $\Camp_Q^{(2)}$ on $\Gamma^2 \times \CS$ {\em (}resp. second-order reduced factorial Campbell measure $\Camp_Q^{(2),!}${\em )} as the following measure:
\begin{eqnarray*}
\Camp_Q^{(2)} (d\gamma,d\gamma', d\mu)&:=& \mu^{\fac}(d\gamma,d\gamma') Q(d\mu) \\
\textrm{ resp.}
\quad \Camp_Q^{(2),!} \,(d\gamma,d\gamma',d\mu)&:=& \Camp_Q^{(2)} \circ (\varsigma^{(2)}_-)^{-1}(d\gamma,d\gamma',d\mu),
\end{eqnarray*}
\end{definition}
Identities \eqref{eq:Mecke} then lead to the following Mecke's bivariate formula satisfied by the Poisson point process $\Prho$ (see e.g. \cite[p. 524]{daley2007introduction}) or \cite[Section 4.2]{lastpenrose}:
\begin{eqnarray} \label{eq:Meckebivariate}
\Camp_{\Prho}^{(2),!} \, (d\gamma,d\gamma',d\mu)&=& \rho \otimes \rho \otimes \Prho \ (d\gamma,d\gamma',d\mu) \nonumber\\
\Camp_{\Prho}^{(2)} (d\gamma,d\gamma',d\mu) &=& \big(\rho \otimes \rho \otimes \Prho\big) \circ (\varsigma_+^{(2)})^{-1} \, (d\gamma,d\gamma',d\mu) .
\end{eqnarray}

%%%%%%%%%%%%%%%%%%%%%%%%%%%%%%%%%%%%%%%%%%%%%%%%%%%%%%%%%%%%%%%%%%%
\subsection{A formula satisfied by the split Poisson point process} \label{sec:2.2}
%%%%%%%%%%%%%%%%%%%%%%%%%%%%%%%%%%%%%%%%%%%%%%%%%%%%%%%%%%%%%%%%%%%%
From now on we need a group structure on the state space in order to define an addition and its inverse operation. For simplicity, we take for the rest of the paper
$\Gamma = \R^d $. We also suppose that the measure $\rho$ on $\gamma$ admits a density function with respect to Lebesgue measure denoted by $\rho$ too.\\
We then consider a {\it splitting transformation} on point measures on $\Gamma$ consisting in splitting  one of their atoms into two new ones,
in a specific way.
More precisely, define on the set  $\{(\gamma,\gamma',\mu) : \gamma \in \mu\} \subset \Gamma^2 \times \CS\setminus \{\underline{0}\}$
 the splitting map $\BOP$:
 \be \label{def:splitting}
(\gamma,\gamma',\mu) \mapsto
\BOP (\gamma,\gamma',\mu) := (\gamma -\gamma',\gamma',\mu - \delta_\gamma + \delta_{\gamma'} + \delta_{\gamma-\gamma'}).
\ee
The first order Campbell measure of a Poisson point process and  its second order Campbell measure are linked through the transformation $\BOP$ in the following way.
\begin{proposition}\label{prop:loopcharact}
Under the Poisson point process of intensity $\rho$, $\Prho$, the following identity holds:
\be\label{eq:1}
 \chi_{\rho} \ \Camp_{\Prho}^{(2)} =
 \ \big( \Camp_{\Prho} \otimes d\gamma' \big) \circ \BOP^{-1}
\ee
where the bivariate function $\chi_{\rho}$ satisfies
\be \label{eq:loopcharact}
\chi_{\rho}(\gamma,\gamma'):= \frac{\rho(\gamma+\gamma')}{\rho(\gamma) \rho( \gamma' ) }.
\ee
\end{proposition}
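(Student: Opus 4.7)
The plan is to verify the identity by testing both sides against an arbitrary nonnegative measurable function $F(\gamma,\gamma',\mu)$ on $\Gamma^2 \times \CS$, and reducing both sides to the same integral via the Mecke formulas \eqref{eq:Mecke} and \eqref{eq:Meckebivariate}, together with the translation invariance of Lebesgue measure on $\Gamma = \R^d$.

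First I would tackle the left-hand side. By definition,
\bes
\int F \, \chi_\rho \, d\Camp_{\Prho}^{(2)} = \int F(\gamma,\gamma',\mu)\,\chi_\rho(\gamma,\gamma')\, \mu^{\fac}(d\gamma,d\gamma')\,\Prho(d\mu).
\ees
The bivariate Mecke identity \eqref{eq:Meckebivariate} rewrites this as
\bes
\int F(\gamma,\gamma',\mu+\delta_\gamma+\delta_{\gamma'})\,\chi_\rho(\gamma,\gamma')\,\rho(\gamma)\rho(\gamma')\,d\gamma\,d\gamma'\,\Prho(d\mu),
\ees
and the explicit form \eqref{eq:loopcharact} of $\chi_\rho$ makes the densities $\rho(\gamma)\rho(\gamma')$ cancel, leaving the clean expression
\bes
\int F(\gamma,\gamma',\mu+\delta_\gamma+\delta_{\gamma'})\,\rho(\gamma+\gamma')\,d\gamma\,d\gamma'\,\Prho(d\mu).
\ees

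Next I would treat the right-hand side. Unfolding the pushforward by $\BOP$ and the definition \eqref{def:splitting} gives
\bes
\int F\bigl(\gamma-\gamma',\gamma',\mu-\delta_\gamma+\delta_{\gamma'}+\delta_{\gamma-\gamma'}\bigr)\,\mu(d\gamma)\,\Prho(d\mu)\,d\gamma'.
\ees
Applying the first-order reduced Mecke formula \eqref{eq:Mecke} to the inner integration in $\gamma$ (against $\mu(d\gamma)\Prho(d\mu)$), the term $-\delta_\gamma$ inside the argument exactly cancels the $+\delta_\gamma$ produced when switching to $\rho(\gamma)d\gamma\,\Prho(d\mu+\delta_\gamma)$, yielding
\bes
\int F(\gamma-\gamma',\gamma',\mu+\delta_{\gamma'}+\delta_{\gamma-\gamma'})\,\rho(\gamma)\,d\gamma\,d\gamma'\,\Prho(d\mu).
\ees

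Finally, I would perform the change of variables $\alpha := \gamma - \gamma'$ at fixed $\gamma'$. Translation invariance of Lebesgue measure on $\R^d$ gives $d\gamma = d\alpha$, and after renaming $\alpha \mapsto \gamma$ the right-hand side matches the simplified left-hand side term by term. Since $F$ is arbitrary, the equality of measures \eqref{eq:1} follows. The main (and only) subtlety is bookkeeping: correctly interpreting the pushforward through the three-variable map $\BOP$ and keeping track of which occurrences of $\delta_\gamma$ get absorbed by the Mecke shift; once that is clear, the proof is a direct computation because the multiplicative weight $\chi_\rho$ has been designed precisely so that $\rho(\gamma)\rho(\gamma')$ cancels out and the translation-invariant structure of $\R^d$ handles the rest.
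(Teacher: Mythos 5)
Your proof is correct and follows essentially the same route as the paper's: bivariate Mecke on the left, univariate Mecke on the right, and the substitution $\gamma \mapsto \gamma - \gamma'$ using translation invariance of Lebesgue measure; the only difference is that you simplify both sides separately and meet in the middle, whereas the paper runs a single chain of equalities from left to right.
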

%%%%%%%%%%%%%%%%%%%%%%%%%%%%%%%%%%%%%%%%%%%%%%%%%%%%%%%%%%%
\begin{proof}
Integrate a positive test function $F$ under the left hand side of \eqref{eq:1}:
\begin{eqnarray*}
&&\int F(\gamma'', \gamma',\mu) \chi_{\rho}(\gamma'',\gamma') \ \Camp_{\Prho}^{(2)} (d\gamma'',d\gamma', d\mu) \\
&\stackrel{\eqref{eq:Meckebivariate}}{=}&
\int F(\gamma'', \gamma',\mu +\delta_{\gamma''} + \delta_{\gamma'} ) \chi_{\rho}(\gamma'',\gamma')\ \rho(d\gamma'')\rho(d\gamma') \Prho(d\mu) \\
&=&
\int F(\gamma -\gamma', \gamma',\mu + \delta_{\gamma-\gamma'}+\delta_{\gamma'} -\delta_{\gamma}+ \delta_{\gamma})
 \ \rho(d\gamma) \Prho(d\mu) d\gamma' \\
&\stackrel{\eqref{eq:Mecke}}{=}&
\int \big(F   \circ \BOP \big) \, (\gamma, \gamma',\mu) \
 \,\Camp_{\Prho}(d\gamma, d\mu)  d\gamma' ,
\end{eqnarray*}
which corresponds to the integral of $F$ under the right hand side of \eqref{eq:1}.
\end{proof}

\begin{corollary}\label{cor:densite}
Identity \eqref{eq:1} gains interesting interpretations by choosing the integrands in an appropriate way.
Since the intensity $\rho$ is  finite,  $\Prho$ a.s. carries
finite random point measures, that is $\mu(\Gamma)< + \infty $ a.s..
Now, take as test function $F$ a function of the following type:
$
\disp F(\gamma, \gamma',\mu):= \frac{\IND_{\mu(\Gamma)>1}}{\mu(\Gamma) - 1} \, \tilde F (\mu)
\varphi(\gamma'),$
where $\varphi$ is a probability density function.
Equality \eqref{eq:1} rewrites:
$$
E_{\Prho} \Big( \tilde F(\mu) D_\rho (\mu)
 \Big)
= E_{\Prho} \Big( \int_{\Gamma^2} \tilde F(\mu - \delta_\gamma + \delta_{\gamma'} + \delta_{\gamma-\gamma'} )
\frac{\mu(d\gamma)}{\mu(\Gamma)}  \varphi(\gamma')d\gamma' \Big)
$$
where
\be \label{def:densite}
 \disp D_\rho (\mu) := \frac{\int_{\Gamma^2} \chi_{\rho}(\gamma,\gamma') \varphi(\gamma') \ \mu^{\fac}(d\gamma,d\gamma')}{\mu(\Gamma) - 1} \IND_{\mu(\Gamma)>1}.
 \ee
This means that if you transform any realisation $\mu$ of the Poisson point process $\Prho$
as follows:
\begin{enumerate}[(1)]
\item if $\mu \not = \underline{0}$, select randomly one atom $\gamma$ of $\mu$
\item sample $\gamma' $ randomly according to the probability law with density $\varphi$
\item and replace the selected atom $\gamma$ by both atoms $\gamma'$ and $\gamma-\gamma'$;
\end{enumerate}
 then the obtained image measure is absolutely continuous with respect to $\Prho$ and the explicit density is expressed by \eqref{def:densite} in terms of the function  $\chi_\rho$.
\end{corollary}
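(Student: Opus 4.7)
The plan is to specialize identity \eqref{eq:1} to the prescribed test function $F(\gamma,\gamma',\mu)=\frac{\IND_{\mu(\Gamma)>1}}{\mu(\Gamma)-1}\,\tilde F(\mu)\,\varphi(\gamma')$ and then read off the resulting equality as a change-of-density statement. Both sides will unfold mechanically; the only point requiring genuine care is the bookkeeping of $\mu(\Gamma)$ before and after the splitting transformation $\BOP$, and everything rests on the happy accident that $\BOP$ adds precisely one net atom.

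For the left-hand side of \eqref{eq:1}, the weights $\tilde F(\mu)$ and $\IND_{\mu(\Gamma)>1}/(\mu(\Gamma)-1)$ depend only on $\mu$, so they can be pulled outside the $\mu^{\fac}(d\gamma,d\gamma')$-integration embedded in $\Camp_{\Prho}^{(2)}$. What remains inside is exactly $\int_{\Gamma^2}\chi_\rho(\gamma,\gamma')\varphi(\gamma')\,\mu^{\fac}(d\gamma,d\gamma')$, which by \eqref{def:densite} equals $(\mu(\Gamma)-1)\,D_\rho(\mu)\,\IND_{\mu(\Gamma)>1}$. Hence the LHS collapses to $E_{\Prho}\big(\tilde F(\mu)\,D_\rho(\mu)\big)$.

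For the right-hand side of \eqref{eq:1}, one evaluates $F\circ\BOP$ at $(\gamma,\gamma',\mu)$, whose third component is $\mu':=\mu-\delta_\gamma+\delta_{\gamma'}+\delta_{\gamma-\gamma'}$. Since $\BOP$ cancels one atom and creates two, $\mu'(\Gamma)=\mu(\Gamma)+1$; and since $\Camp_{\Prho}$ restricts integration to $\gamma\in\mu$, forcing $\mu(\Gamma)\geq 1$, one has $\mu'(\Gamma)\geq 2$. Consequently $\IND_{\mu'(\Gamma)>1}\equiv 1$ and the denominator simplifies to $\mu'(\Gamma)-1=\mu(\Gamma)$. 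Substituting into $\int F\circ\BOP(\gamma,\gamma',\mu)\,\mu(d\gamma)\Prho(d\mu)\,d\gamma'$ delivers exactly the announced right-hand side.

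Finally, for the probabilistic interpretation, recognize $\frac{\mu(d\gamma)}{\mu(\Gamma)}$ as the uniform law on the atoms of $\mu$ (step (1)) and $\varphi(\gamma')d\gamma'$ as the sampling of $\gamma'$ with density $\varphi$ (step (2)); the third component $\mu'$ is precisely the outcome of step (3). The right-hand side is thus $E_{\Prho}\big[\tilde F(T(\mu))\big]$ for the random transformation $T$ of steps (1)--(3), applied to $\mu$ drawn from $\Prho$ conditioned on $\mu\neq\underline{0}$ (the event $\{\underline{0}\}$ being inert on both sides). Matching this with $\int\tilde F(\mu)D_\rho(\mu)\,\Prho(d\mu)$ over arbitrary bounded test functions $\tilde F$ identifies the image law as absolutely continuous with respect to $\Prho$ with density $D_\rho$, as claimed. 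I do not anticipate any serious obstacle; the argument is a direct substitution into \eqref{eq:1} plus the observation about atom counts under $\BOP$.
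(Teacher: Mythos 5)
Your proof is correct and follows exactly the route the paper intends (the paper gives no separate proof, treating the corollary as an immediate substitution into \eqref{eq:1}): plug in the prescribed $F$, pull the $\mu$-dependent weights out of the $\mu^{\fac}$-integral on the left, and use $\mu'(\Gamma)=\mu(\Gamma)+1$ with $\gamma\in\mu$ on the right to kill the indicator and turn the denominator into $\mu(\Gamma)$. The only nit is in the interpretation: the right-hand side is the image of $\Prho$ \emph{restricted} to $\{\underline{0}\}^{c}$ under the transformation (a sub-probability measure), not of $\Prho$ \emph{conditioned} on that event, which would introduce a normalising constant absent from $D_\rho$.
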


%%%%%%%%%%%%%%%%%%%%%%%%%%%
%%%%%%%%%%%%%%%%%%%%%%%%%%%%%%%%%%%%%%%%%%%%%%%%%%%%%%%%%%%%%%%%%%%%%%%%%%%%%%%%%%%%%%%%%%%%%
\subsection{How to characterize the split Poisson point process pinned by its first moment}
%%%%%%%%%%%%%%%%%%%%%%%%%%%%%%%%%%%%%%%%%%%%%%%%%%%%%%%%%%%%%%%%%%%%%%%%%%%%%%%%%%%%%%%%%%%%%
Recall the definition of the {\em first moment } of a finite point measure $\mu\not =\underline{0}$ on $\Gamma$:
 $$
\B(\mu) := \int_{\Gamma} \gamma \,  \mu(d \gamma) =  \sum_{\gamma \in \mu} \gamma .
$$
Clearly one has $\B(\underline{0})=0$. \\
Remark that the first moment of a point measure, which is a random variable with values in $\Gamma$, remains invariant under the splitting transform $\BOP$ introduced
above:
\be \label{eq:firstmomentinvariant}
\B(\mu - \delta_\gamma + \delta_{\gamma'} + \delta_{\gamma-\gamma'}) = \B(\mu), \quad  \forall (\mu, \gamma, \gamma') \in \CSf \times \Gamma^2 .
\ee
The goal of this section is first, revisiting \eqref{eq:1}, to show that this identity remains true if one conditions the probability $\Prho$ by the event $\B^{-1}(\cc)= \{\mu: \B(\mu)= \cc \}$, $\cc \in \Gamma$; much more, we will prove that \eqref{eq:1} indeed characterizes the conditioned probability $\Prho^\cc(d\mu):=\Prho(d\mu \, | \B= \cc), \cc \not =0,$ within the set of probability measures on $\CSf$ with support included in  $\B^{-1}(\cc)$.\\
Notice that, since $\rho$ is diffuse, the law of $\B$ under $\Prho( \cdot \,|\{\underline{0}\}^c)$ is diffuse and therefore, for any $\cc \not =0$, the event $\{\B= \cc\}$ is $\Prho$-negligible. Nevertheless the conditioned probability $\Prho(d\mu \, | \B= \cc)$ can be constructed as limit measure for $\ve \rightarrow 0$ of the conditioned measures
$
 \Prho^{\cc,\ve} (\cdot):=\Prho( \cdot \ \vert \B \in B(\cc,\ve))
$
where $B(\cc,\ve)$ denotes the ball centered in $\cc$ with radius $\ve$.

%%%%%%%%%%%%%%%%%%%%%%%%%%%%%%%%%%%%%%%%%%%%%%%%%%%%%%%%%%%

\begin{theorem} \label{th:main}
Suppose $Q$ is a finite point process on $\cX$ and $\cc \in \Gamma\setminus \{0\}$. Then
\begin{displaymath}
   \left\{
   \begin{array}{rl}
     \chi_{\rho} \ \Camp_{Q}^{(2)}  \stackrel{(*)}{=} \big( \Camp_{Q} \otimes d\gamma' \big) \circ \BOP^{-1} & \\
     Q\big(\CSf \cap \B^{-1}(\cc) \big)  = 1&
   \end{array}
    \right.
    \Longleftrightarrow \quad   Q= \Prho^\cc.
\end{displaymath}
In other words,
$\Prho^\cc$ is the only finite point process on $\cX$ concentrated on the set $\{\B=\cc\}$
which fulfills the identity {\em (*)}.
\end{theorem}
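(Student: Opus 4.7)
The plan is to establish the two implications separately.

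For the direction $(\Leftarrow)$, I would deduce (*) for $Q = \Prho^\cc$ from the analogous unconditional identity in Proposition~\ref{prop:loopcharact} via disintegration with respect to the first moment. The crucial observation is that by~\eqref{eq:firstmomentinvariant} the splitting map $\BOP$ preserves $\B$, so both measures appearing in~\eqref{eq:1}, namely $\chi_\rho\,\Camp_{\Prho}^{(2)}$ and $(\Camp_{\Prho}\otimes d\gamma')\circ\BOP^{-1}$, disintegrate consistently along the level sets of $\B$. Writing $\Prho = \int \Prho^{\mathfrak{b}}\, P_\B(d\mathfrak{b})$, where $P_\B$ is the law of $\B$ under $\Prho$, and testing~\eqref{eq:1} against $H(\B(\mu))\,F_0(\gamma'',\gamma',\mu)$ for arbitrary bounded Borel $H$, the identity becomes an equality of mixtures over $\mathfrak{b}\in\Gamma$; since $H$ is arbitrary, the integrands agree $P_\B$-a.e., giving (*) for $\Prho^{\mathfrak{b}}$ at $P_\B$-a.e.\ $\mathfrak{b}$. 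The construction $\Prho^\cc := \lim_{\ve\to 0}\Prho^{\cc,\ve}$ then extends the identity to every admissible $\cc$ by continuity.

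For the direction $(\Rightarrow)$, I would argue by induction on $n = \mu(\Gamma)$, exploiting that $\BOP$ increases the number of atoms by exactly one, $d\gamma'$-a.s.\ (since for any fixed finite $\mu$, only a Lebesgue-null set of $\gamma'$ produces a coincidence with an existing atom of $\mu - \delta_\gamma$). Decomposing $Q = \sum_{n\ge 1} q_n\, Q_n$, where $q_n := Q(\mu(\Gamma)=n)$ and $Q_n := Q(\,\cdot\,|\,\mu(\Gamma)=n)$ (and noting $q_0 = 0$ since $\underline{0}\notin\B^{-1}(\cc)$ when $\cc \neq 0$), and restricting (*) to output configurations with exactly $n$ atoms, yields for every $n\ge 2$ the recursion
\begin{equation*}
  q_n\, \chi_\rho\, \Camp_{Q_n}^{(2)} \;=\; q_{n-1}\,\bigl(\Camp_{Q_{n-1}}\otimes d\gamma'\bigr)\circ\BOP^{-1}.
\end{equation*}
Taking the $\mu$-marginal gives $q_n\, M \cdot Q_n = q_{n-1}\, R_n$, where $M(\mu):=\sum_{\gamma''\neq\gamma'\in\mu}\chi_\rho(\gamma'',\gamma')$ and $R_n$ is the push-forward of the splitting kernel applied to $Q_{n-1}$. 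Since $\rho > 0$, we have $\chi_\rho>0$ and hence $M>0$ on $\{\mu(\Gamma)\ge 2\}$, so pointwise division by $M$ recovers the measure $(q_n/q_{n-1})\,Q_n$ from $R_n$, and normalization of $Q_n$ as a probability fixes the ratio $q_n/q_{n-1}$.

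The base case is trivial: the only one-atom configuration with first moment $\cc$ is $\delta_\cc$, so $Q_1$ is necessarily the Dirac mass at $\delta_\cc$; moreover $q_1>0$, since otherwise the recursion propagates $q_n=0$ for all $n\ge 1$, contradicting that $Q$ is a probability. Iterating the recursion and using $\sum_n q_n = 1$ thus determines $Q$ uniquely, and as $\Prho^\cc$ satisfies the same recursion (by the first direction), we conclude $Q = \Prho^\cc$. The main obstacle I anticipate is the clean passage from the global identity (*) to the level-wise recursion: one must check carefully that, $d\gamma'$-a.s., $\BOP$ adds exactly one atom, so that the indicator $\IND_{\mu(\Gamma)=n}$ on the output side matches $\IND_{\mu(\Gamma)=n-1}$ on the input side. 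A secondary technical point is that $R_n$ is $\sigma$-finite but not finite, due to the unbounded $d\gamma'$-integral; the identity must therefore be read as an equality of $\sigma$-finite measures, and the recovery of the probability $Q_n$ remains unambiguous thanks to the pointwise positivity of $\chi_\rho$.
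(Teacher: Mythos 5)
Your proposal is correct. The forward implication (that $\Prhoc$ satisfies (*)) is argued exactly as in the paper, by disintegrating \eqref{eq:1} along the level sets of $\B$ using the invariance \eqref{eq:firstmomentinvariant}. For the converse, however, you take a genuinely different route. The paper introduces the \emph{diminished} process $Q^-$, obtained by deleting a uniformly chosen atom, and shows in Propositions \ref{prop:1}--\ref{prop:3} that (*) together with $Q(\B=\cc)=1$ forces $Q^-$ to be absolutely continuous with respect to $\Prho$ with density proportional to $\rho(\cc-\B(\mu))/(\mu(\Gamma)+1)$ --- the same density as $(\Prhoc)^-$ --- and then reconstructs $Q$ from $Q^-$ using that on $\{\B=\cc\}$ the deleted atom is recoverable as $\cc-\B(\mu-\delta_\gamma)$. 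You instead exploit that $\BOP$ raises the atom count by exactly one, which turns (*) into a level-wise recursion determining $Q(\cdot\mid\mu(\Gamma)=n)$ and the mass ratio $q_n/q_{n-1}$ from level $n-1$, anchored at the forced base case $Q_1=\Delta_\cc$ and at $q_1>0$. Both mechanisms rest on the strict positivity of the density $\rho$: the paper divides by $\rho(\gamma)$ and $\rho(\cc-\B(\mu))$, you divide by $M(\mu)=\int\chi_\rho\,d\mu^{\fac}$. What the paper's route buys is the explicit Radon--Nikodym derivative of the diminished pinned process with respect to the free Poisson process, a statement of independent interest that also makes the link to Mecke's formula transparent; what yours buys is a more elementary uniqueness argument that never compares $Q$ to $\Prho$ globally and yields, as a by-product, a recursive construction of the conditional laws $\Prhoc(\cdot\mid\mu(\Gamma)=n)$. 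Your treatment of the two technical points you flag is adequate; note that the first one (that $\BOP$ adds exactly one atom) is in fact automatic, with no exceptional null set, once $\mu(\Gamma)$ is read as the total mass of the point measure rather than the cardinality of its support, since $(\mu-\delta_\gamma+\delta_{\gamma'}+\delta_{\gamma-\gamma'})(\Gamma)=\mu(\Gamma)+1$ identically.
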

{\em Proof } To prove  that $\Prhoc$ fulfills Identity (*) is straightforward.
Disintegrate the measure $\Prho$ along all possible values of $\B$:
$\Prho = \int \Prhoc\ \lambda_\rho (d\cc ) $ where $\lambda_\rho$ is the image measure of $\Prho$
under $\B$, and write the identity \eqref{eq:1} tested on functions defined on $\Gamma^2 \times  \CSf$ of the form $f(\B(\mu)) F(\gamma,\gamma',\mu)$. One obtains, using the invariance property \eqref{eq:firstmomentinvariant}:
\begin{eqnarray*}
&&\int f(\B(\mu)) \, F (\gamma,\gamma',\mu) \,\chi_{\rho}(\gamma,\gamma')\ \mu^{\fac}(d\gamma,d\gamma')
\Prhoc(d\mu) \lambda_\rho (d\cc)\\
&&=
\int f(\B\circ \BOP(\mu)) \,  F   \circ \BOP \,(\gamma,\gamma',\mu) \
 \mu (d\gamma) \Prhoc(d\mu)  d\gamma' \lambda_\rho (d\cc)\\
&\Longleftrightarrow &\\
&&\int f(\cc) \Big( \int F (\gamma,\gamma',\mu) \,\chi_{\rho}(\gamma,\gamma')\ \mu^{\fac}(d\gamma,d\gamma')
\Prhoc(d\mu) \Big) \lambda_\rho (d\cc)\\
&&=
\int f(\cc) \Big(\int F   \circ \BOP (\gamma,\gamma',\mu) \,  \
 \,\mu (d\gamma) \Prhoc(d\mu)  d\gamma' \Big) \lambda_\rho (d\cc).
\end{eqnarray*}
This is enough to deduce that (*) holds for $Q=\Prhoc$.\\
Before proving the implication from the left to the right in Theorem \ref{th:main}, we  develop some necessary tools.
First we introduce for any finite point process $Q$ its associated {\em diminished} point process  $Q^-$, which is constructed by removing one atom at random from any realization of $Q$:
%%%%%%%%%%%%%%%%%%%%%
\begin{definition}[Diminished point process]
The diminished point process $Q^-$ of a point process $Q \in \Prob(\CSf)$ which does not carry the zero measure is defined as follows:
for any positive test function $F$ on $\CSf$,
\be \label{eq:Q-}
E_{Q^-}(F) =
E_{Q}\left( \int_{\Gamma}  F (\mu - \delta_\gamma)\,  \frac{\mu(d\gamma)}{\mu(\Gamma)}  \right).
\ee
\end{definition}
%%%%%%%%%%%%%%%%%%%%%
%
%
The end of the (tricky) proof of Theorem \ref{th:main} is now a direct consequence of the next three propositions.\\
For $\cc\not = 0$ the conditioned point process $\Prhoc$, which does not carry the zero measure and is concentrated on a $\Prho$-negligible set, is singular with respect to $\Prho$. Nevertheless, it is remarkable that its diminished version $(\Prhoc)^-$ is absolutely continuous with respect to $\Prho$, as stated in the next proposition.
\begin{proposition} \label{prop:1}
 For any $\cc\not = 0$ the diminished conditioned Poisson point process $(\Prhoc)^-$ is absolutely continuous with respect to $\Prho$ and its density is proportional to $\disp \frac{ \rho(\cc-\B(\mu))}{\mu(\Gamma)+1}$.
\end{proposition}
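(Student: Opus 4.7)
}
The plan is to deduce the claim directly from the classical Mecke formula \eqref{eq:Mecke} combined with a disintegration of $\Prho$ along the first moment. The key observation that makes everything work is the identity
\bes
\B(\mu-\delta_\gamma) + \gamma = \B(\mu),
\ees
which lets us ``extract'' the first moment of $\mu$ from the value of $\mu-\delta_\gamma$ and $\gamma$.

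The main step is the following. Given an arbitrary bounded measurable test function $F$ on $\CSf$ and an arbitrary bounded measurable $g$ on $\Gm$, set $\tilde f(\nu):=F(\nu)/(\nu(\Gm)+1)$ and apply Mecke's identity \eqref{eq:Mecke} to the test function $G(\gamma,\mu):=\tilde f(\mu)\,g(\gamma+\B(\mu))$. Using the above identity on the left-hand side and the change of variable $\gamma'=\gamma+\B(\mu)$ on the right-hand side (legitimate since Lebesgue measure is translation-invariant), I obtain
\bea \label{eq:planone}
E_{\Prho}\!\left[g(\B(\mu))\int_\Gm \tilde f(\mu-\delta_\gamma)\,\mu(d\gamma)\right]
= \int_\Gm g(\gamma')\,E_{\Prho}\!\left[\tilde f(\mu)\,\rho(\gamma'-\B(\mu))\right]d\gamma'.
\eea

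Next, since $\rho$ is a diffuse density, the law $\lambda_\rho$ of $\B$ under $\Prho$ is absolutely continuous with respect to Lebesgue measure; write $\lambda_\rho(d\cc)=\ell_\rho(\cc)\,d\cc$. Disintegrating the left-hand side of \eqref{eq:planone} as $\Prho=\int \Prhoc\,\lambda_\rho(d\cc)$ and comparing with the right-hand side, the arbitrariness of $g$ yields that for $d\cc$-almost every $\cc$,
\bes
\ell_\rho(\cc)\,E_{\Prhoc}\!\left[\int_\Gm \tilde f(\mu-\delta_\gamma)\,\mu(d\gamma)\right]
= E_{\Prho}\!\left[\tilde f(\mu)\,\rho(\cc-\B(\mu))\right].
\ees
Recalling that $\tilde f(\mu-\delta_\gamma)=F(\mu-\delta_\gamma)/\mu(\Gm)$, the left-hand side is exactly $\ell_\rho(\cc)\cdot E_{(\Prhoc)^-}(F)$ by the definition \eqref{eq:Q-} of the diminished process, and the identity reads
\bes
E_{(\Prhoc)^-}(F) \;=\; \frac{1}{\ell_\rho(\cc)}\,E_{\Prho}\!\left[F(\mu)\,\frac{\rho(\cc-\B(\mu))}{\mu(\Gm)+1}\right],
\ees
which is the desired absolute continuity with density proportional to $\rho(\cc-\B(\mu))/(\mu(\Gm)+1)$; the proportionality constant $1/\ell_\rho(\cc)$ is implicitly determined by the fact that $(\Prhoc)^-$ is a probability measure.

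The main obstacle is the conceptual one of making rigorous sense of $\Prhoc$ (a measure concentrated on a $\Prho$-negligible event) through the disintegration, and then justifying the passage from the identity \eqref{eq:planone} valid for every $g$ to a pointwise identity between conditional expectations. Once the disintegration is in place, everything else is a clean application of Mecke's formula and the translation invariance of Lebesgue measure; the identity \eqref{eq:planone} that packages Mecke, the invariance of $\B$ under addition/removal of an atom, and the change of variables in a single step is the essential computation.
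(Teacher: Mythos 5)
Your argument is correct and rests on exactly the same two pillars as the paper's own proof: Mecke's identity \eqref{eq:Mecke} and the additivity $\gamma+\B(\mu-\delta_\gamma)=\B(\mu)$, with translation invariance of Lebesgue measure producing the factor $\rho(\cc-\B(\mu))$. Where you genuinely diverge is in how the singular conditioning is handled. The paper never disintegrates: it works with the honest conditional laws $\Prho^{\cc,\ve}=\Prho(\cdot\mid\B\in B(\cc,\ve))$, computes the density of $(\Prho^{\cc,\ve})^-$ exactly (this is your computation specialised to $g=\mathbf{1}_{B(\cc,\ve)}$, up to normalisation), and then lets $\ve\downarrow 0$; this matches the paper's definition of $\Prhoc$ as the limit of the $\Prho^{\cc,\ve}$ and yields a statement for each fixed $\cc$, modulo a differentiation/continuity point the paper glosses over when identifying the limit of $\ve^{-d}\int_{B(\cc-\B(\mu),\ve)}\rho(\gamma)\,d\gamma$. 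Your route keeps $g$ arbitrary and identifies two measures in the variable $\cc$, which is cleaner but only delivers the density formula for Lebesgue-almost every $\cc$, whereas the proposition asserts it for \emph{any} $\cc\neq 0$; closing that gap requires either selecting the regular version of the disintegration given by the $\ve\to 0$ limit (at which point you have essentially reproduced the paper's proof) or a continuity argument in $\cc$. You correctly flag this as the remaining obstacle. Two further small points: $\lambda_\rho$ has an atom at $0$ coming from the empty configuration, so the identification of densities should be carried out on $\{\underline{0}\}^c$ (where the paper notes the law of $\B$ is diffuse); and the left-hand side of your key identity vanishes on $\{\mu=\underline{0}\}$, which is what makes the atomic part harmless. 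Apart from these versioning issues, which are of the same nature as those the paper itself leaves implicit, your computation is sound.
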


\begin{proposition} \label{prop:2}
Suppose the finite point process $Q$ fulfills {\em (*)} and, for some $\cc \not =0$,  $Q(\B =\cc)=1$.
%and is concentrated on $\B^{-1}(\cc)$ .
Then $Q^-$ is absolutely continuous with respect to $\Prho$ and its density is proportional to $\disp \frac{ \rho(\cc-\B(\mu))}{\mu(\Gamma)+1}$.
\end{proposition}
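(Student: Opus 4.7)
My plan is to apply the functional equation $(*)$ to a carefully chosen test function so that one side produces $E_{Q^-}(H)$. Concretely, for an arbitrary bounded $H$ on $\CSf$ and probability density $\varphi$ on $\Gamma$, I take
\[
F(\gamma,\gamma',\mu) := \frac{\IND_{\mu(\Gamma)\geq 2}}{\mu(\Gamma)-1}\, H(\mu-\delta_\gamma-\delta_{\gamma'})\,\varphi(\gamma').
\]
A direct computation using $\BOP(\gamma,\gamma',\mu) = (\gamma-\gamma',\gamma',\mu-\delta_\gamma+\delta_{\gamma'}+\delta_{\gamma-\gamma'})$ yields $(F\circ\BOP)(\gamma,\gamma',\mu) = H(\mu-\delta_\gamma)\,\varphi(\gamma')/\mu(\Gamma)$; integrating out $\gamma'$ against $\varphi$, the right-hand side of $(*)$ collapses to $E_{Q^-}(H)$, so
\[
E_{Q^-}(H) \;=\; \int \frac{H(\mu-\delta_\gamma-\delta_{\gamma'})\,\chi_\rho(\gamma,\gamma')\,\varphi(\gamma')}{\mu(\Gamma)-1}\,\mu^{\fac}(d\gamma,d\gamma')\,Q(d\mu),
\]
valid for every probability density $\varphi$.

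Next, I use $Q(\B=\cc)=1$: for any ordered distinct pair $(\gamma,\gamma')$ of atoms of $\mu \in \mathrm{supp}(Q)$, $\gamma+\gamma'=\cc-\B(\mu')$ with $\mu':=\mu-\delta_\gamma-\delta_{\gamma'}$, so $\chi_\rho(\gamma,\gamma') = \rho(\cc-\B(\mu'))/(\rho(\gamma)\rho(\gamma'))$; the choice $\varphi=\rho/\rho(\Gamma)$ cancels the $\rho(\gamma')$ in the denominator and gives
\[
\rho(\Gamma)\,E_{Q^-}(H) \;=\; \int \frac{H(\mu')\,\rho(\cc-\B(\mu'))}{\rho(\gamma)\,(\mu'(\Gamma)+1)}\,\mu^{\fac}(d\gamma,d\gamma')\,Q(d\mu).
\]

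The last step is to transport this integral from $Q$ to $\Prho$. The plan is to apply the Mecke bivariate formula \eqref{eq:Meckebivariate} in reverse, combined with a further use of $(*)$, so as to trade $\mu^{\fac}(d\gamma,d\gamma')\,Q(d\mu)$ for the Poisson factor $\rho(d\gamma)\,\rho(d\gamma')\,\Prho(d\mu')$ on the slice $\{\gamma+\gamma' = \cc - \B(\mu')\}$ imposed by the constraint. Integrating $\gamma$ against $\rho(d\gamma)$ on this slice produces a factor $\rho(\cc-\B(\mu')-\gamma')$ matching the residual $\rho(\gamma)^{-1}$ on $\mathrm{supp}(Q)$, and a final integration of $\gamma'$ against $\rho(d\gamma')$ yields the overall normalization $\rho(\Gamma)$ together with the advertised integral of $H(\mu')\,\rho(\cc-\B(\mu'))/(\mu'(\Gamma)+1)$ against $\Prho(d\mu')$; one concludes $E_{Q^-}(H) = \kappa\int H(\mu)\,\rho(\cc-\B(\mu))/(\mu(\Gamma)+1)\,\Prho(d\mu)$ for an explicit $\kappa=\kappa(\rho,\cc)$.

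The main obstacle is this last transport step: since $Q$ is concentrated on the $\Prho$-null hyperplane $\{\B=\cc\}$, it is singular with respect to $\Prho$, so one cannot directly change measure. What makes the passage work is the combination of $(*)$ with the invariance of $\B$ under $\BOP$: the free Lebesgue variable $\gamma'$ in the right-hand side of $(*)$ effectively ``unfreezes'' the constraint at the level of the Campbell measure, and careful accounting of the $\rho$-factors is needed to check that they combine to produce precisely the density $\rho(\cc-\B(\mu))/(\mu(\Gamma)+1)$ and no other.
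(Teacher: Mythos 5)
Your first step is correct and nicely executed: plugging
$F(\gamma,\gamma',\mu)=\frac{\IND_{\mu(\Gamma)\geq 2}}{\mu(\Gamma)-1}H(\mu-\delta_\gamma-\delta_{\gamma'})\varphi(\gamma')$
into (*) does collapse the right-hand side to $E_{Q^-}(H)$, and with $\varphi=\rho/\rho(\Gamma)$ and the constraint $\gamma+\gamma'=\cc-\B(\mu')$, $\mu':=\mu-\delta_\gamma-\delta_{\gamma'}$, you correctly arrive at
\[
\rho(\Gamma)\,E_{Q^-}(H)=\int \frac{H(\mu')\,\rho(\cc-\B(\mu'))}{\rho(\gamma)\,\big(\mu'(\Gamma)+1\big)}\,\mu^{\fac}(d\gamma,d\gamma')\,Q(d\mu).
\]
But the proof stops exactly where the real work begins. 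Your ``transport step'' proposes to replace $\mu^{\fac}(d\gamma,d\gamma')Q(d\mu)$ by $\rho(d\gamma)\rho(d\gamma')\Prho(d\mu')$ via ``the Mecke bivariate formula in reverse.'' Identity \eqref{eq:Meckebivariate} is a property of the Poisson process $\Prho$; it is not available for $Q$, about which you know only (*) and the pinning. Asserting that $\Camp_Q^{(2),!}$ factorizes in Poisson fashion is essentially assuming the conclusion (and it is in fact false: $Q$ is singular with respect to $\Prho$, and its reduced Campbell measures still carry the constraint). A ``further use of (*)'' cannot rescue this either: (*) only converts a $\chi_\rho$-weighted second-order Campbell integral of $Q$ into a first-order Campbell integral of $Q$, so iterating it keeps you inside the family of Campbell measures of $Q$ and never produces $\Prho$.

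The missing idea is a characterization of $\Prho$ that can actually be verified from the data at hand. The paper's proof supplies it: set $\tilde Q := \frac{\mu(\Gamma)+1}{\rho(\cc-\B(\mu))}\,Q^-$ and check, using one application of (*) together with $Q(\B=\cc)=1$, that $\tilde Q$ satisfies the first-order Slivnjak--Mecke identity $\Camp_{\tilde Q}=\big(\rho\otimes\tilde Q\big)\circ(\varsigma_+)^{-1}$; by \eqref{eq:Mecke} this forces $\tilde Q$ to be proportional to $\Prho$, which is precisely the claimed absolute continuity with the claimed density. Your displayed identity could serve as an ingredient, but to close the argument you must still exhibit some explicit modification of $Q^-$ and verify for it Mecke's identity (or another characterizing property of the Poisson process); as written, the conclusion does not follow from the steps you give.
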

\begin{proposition} \label{prop:3}
Suppose the finite point process $Q$ is concentrated on \mbox{$\{\B = \cc\}$}. If its diminished version satisfies $Q^-= (\Prhoc)^-$ then  $Q= \Prhoc$.
\end{proposition}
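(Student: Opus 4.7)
The plan is to exploit the crucial invertibility fact that, once the total first moment is pinned to $\cc$, the diminishing operation is reversible: if $\tilde\mu = \mu-\delta_\gamma$ with $\B(\mu)=\cc$, then the removed atom is recovered by $\gamma = \cc - \B(\tilde\mu)$, and hence $\mu = \tilde\mu + \delta_{\cc-\B(\tilde\mu)}$. This will show that the map $Q\mapsto Q^-$ is injective on the set of probability measures on $\CSf$ supported on $\{\B=\cc\}$, which is exactly what is required.

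Concretely, I would introduce the measurable \emph{augmentation} map $\Phi_\cc \colon \CSf \to \CSf$ defined by $\Phi_\cc(\tilde\mu) := \tilde\mu + \delta_{\cc - \B(\tilde\mu)}$. By the observation above, for every $\mu$ with $\B(\mu)=\cc$ and every atom $\gamma \in \mu$ one has $\Phi_\cc(\mu-\delta_\gamma) = \mu$. Applying the definition \eqref{eq:Q-} of the diminished process to the test function $F := G\circ \Phi_\cc$, for an arbitrary bounded measurable $G$ on $\CSf$, and using that $Q$ is concentrated on $\{\B=\cc\}$, one computes
$$ E_{Q^-}(G\circ \Phi_\cc) \,=\, E_Q\!\left(\int_\Gamma G\bigl(\Phi_\cc(\mu - \delta_\gamma)\bigr)\, \frac{\mu(d\gamma)}{\mu(\Gamma)}\right) \,=\, E_Q\!\left(G(\mu)\int_\Gamma \frac{\mu(d\gamma)}{\mu(\Gamma)}\right) \,=\, E_Q(G). $$
Since $\Prhoc$ is likewise concentrated on $\{\B=\cc\}$, the very same identity yields $E_{(\Prhoc)^-}(G\circ \Phi_\cc) = E_{\Prhoc}(G)$. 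The hypothesis $Q^- = (\Prhoc)^-$ therefore forces $E_Q(G) = E_{\Prhoc}(G)$ for every bounded measurable $G$, whence $Q = \Prhoc$.

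There is no real technical obstacle: the argument is a one-line bijection between $\{\mu:\B(\mu)=\cc\}$ and its image under the diminishing map. The only point worth checking is that the division by $\mu(\Gamma)$ in \eqref{eq:Q-} is legitimate, which is automatic: since $\cc\neq 0 = \B(\underline{0})$, both $Q$ and $\Prhoc$ give zero mass to the zero measure, so $\mu(\Gamma)\geq 1$ almost surely under either.
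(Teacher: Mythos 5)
Your proof is correct and rests on exactly the same key observation as the paper's: under the pinning $\B(\mu)=\cc$, the removed atom is recoverable as $\gamma=\cc-\B(\mu-\delta_\gamma)$, so the diminishing map is injective on laws supported in $\{\B=\cc\}$. The paper packages this by reconstructing $\Camp_Q^!$ from $\Camp_{Q^-}$, whereas you invert directly at the level of laws via the explicit augmentation map $\Phi_\cc$ --- a slightly more streamlined presentation of the same argument.
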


\begin{proof} {\it of Proposition \ref{prop:1}}\\
%%%%%%%%%%%%%%%%%%%%%
Let us first prove that  $(\Prho^{\cc,\ve})^-$ is absolutely continuous w.r.t.  $\Prho$.
Take $\ve < |\cc|$. \\
Thus,  for any $\mu$ in the support of $\Prho^{\cc,\ve}$, $\B(\mu)$ does not vanish which implies that  $\Prho^{\cc,\ve}$ does not carry the zero measure.
Then for all functions $F$ bounded and measurable:
\beas
E_{(\Prho^{\cc,\ve})^-}(F) &=&
  \int \int_{\Gamma}  F (\mu - \delta_\gamma)\,  \frac{\mu (d\gamma)}{\mu(\Gamma)} \  \Prho^{\cc,\ve}(d\mu) \\
%$\frac{d P^{c,\ve}_{\rho} }{d P_{\rho} } = \frac{1}{Z^{c,\ve}_{\rho}} \mathbf{1}_{I(\mu) \in B_{c}(\ve)}$
	&=& \frac{1}{Z^{\cc,\ve}_{\rho}} \int \int_{\Gamma}
\mathbf{1}_{B(\cc,\ve)}\circ \B\,(\mu) \,   F(\mu - \delta_\gamma) \,  \frac{1}{\mu(\Gamma)}\Camp_{\Prho}  (d\gamma,d\mu)  \\
		&=&\frac{1}{Z^{\cc,\ve}_{\rho}} \int \int_{\Gamma}
\mathbf{1}_{B(\cc - \gamma,\ve)} \circ \B \, (\mu) F (\mu) \,
\frac{d\mu}{\mu (\Gamma)+1} \Camp^!_{\Prho}  (d\gamma,d\mu)  \\
	&\stackrel{\eqref{eq:Mecke}}{=}& \frac{1}{Z^{\cc,\ve}_{\rho}} E_{\Prho} \left( F(\mu) \frac{1}{\mu(\Gamma)+1}\int_{\Gamma}\mathbf{1}_{B(\cc - \gamma,\ve)} \circ \B \, (\mu) \ \rho(d \gamma)   \right) \\
 &=&  \frac{1}{Z^{\cc,\ve}_{\rho}} E_{\Prho} \left( F(\mu) \frac{1}{\mu(\Gamma)+1}
\int_{B(\cc - \B(\mu),\ve)} \rho(\gamma) d \gamma \right)
\eeas
Therefore
$$
\frac{d(\Prho^{\cc,\ve})^- }{ d \Prho } (\mu) =  \frac{1}{ Z^{\cc,\ve}_{\rho}} \frac{1}{ \mu(\Gamma) +1} \int_{ B(\cc-\B(\mu),\ve) } \rho( \gamma) d \gamma
$$
where $ Z^{\cc,\ve}_{\rho}$ is the renormalising constant
 $ Z^{\cc,\ve}_{\rho}:=  \Prho( \B \in B(\cc,\ve)) $.\\
Now we pass to the limit as $\ve  \rightarrow 0$ and
check that $\frac{d(\Prho^{\cc,\ve})^- }{ d \Prho } $ converges. Clearly,\\
$
\int_{B(\cc - \B(\mu),\ve)} \rho(\gamma) d \gamma  =  \ O(\ve^d)
$
where $\rho$ is the density of the intensity measure of $\Prho$. On the other side, since the law of $\B$ under $\Prho(\cdot|\{\underline{0}\}^c)$ is absolutely continuous, $Z^{\cc,\ve}_{\rho} $ is also of order $\ve^d$ as $\ve \downarrow 0$.
This completes the proof of Proposition \ref{prop:1}.
\end{proof}
\begin{proof}  {\it of Proposition \ref{prop:2}}\\
Assume  that $Q$ satisfies (*). We have to show that
$$
\tilde{Q}  :=\frac{\mu(\Gamma)+1}{ \rho(\cc-\B(\mu))} \ Q^-.
$$
is indeed proportional to the Poisson process $\Prho$, or equivalently that $\tilde{Q} $ satisfies
Mecke's formula \eqref{eq:Mecke}.
Therefore we compute the integral of any  test function $F \in \mathcal{B}( \Gamma \times \CS)$
under the measure $ \big(\rho \otimes \tilde{Q} \big) \circ (\varsigma_+)^{-1} $:
\beas
 &&\int
F(\gamma',\mu +\delta_{\gamma'} )  \, \rho(d\gamma')\tilde Q(d\mu)\\
&=&
  \int_{\Gamma \times \CS}
F(\gamma',\mu +\delta_{\gamma'} )  \,  \frac{\mu(\Gamma)+ 1}{\rho( c-\B(\mu )) }
\, Q^-(d\mu) \rho(d\gamma') \\
&\stackrel{\eqref{eq:Q-}}{=}&
  \int_{\Gamma^2 \times \CS}
F(\gamma',\mu +\delta_{\gamma'}-\delta_\gamma )  \,
\frac{(\mu - \delta_\gamma)(\Gamma)+ 1}{\rho( \cc-\B(\mu-\delta_\gamma )) } \frac{\mu(d\gamma)}{\mu(\Gamma)}
\, Q(d\mu) \rho(d\gamma') \\
&=&
\int_{\Gamma^2 \times \CS}
F(\gamma',\mu +\delta_{\gamma'}-\delta_\gamma )  \,  \frac{\rho(\gamma')}{\rho(\gamma)}
\, \Camp_Q(d\gamma, d\mu) d\gamma' ,
\eeas
since $Q$ is concentrated on point measures with fixed first moment equal to $\cc$.
Now define the function
$\tilde{F} \in \mathcal{B}(\Gamma^2 \times \CS)$ by
\bes
\tilde{F}(\gamma'', \gamma',\mu) :=
\frac{\rho(\gamma')}{\rho(\gamma'' + \gamma')} F(\gamma',\mu - \delta_{\gamma''}) .
\ees
The above identity rewrites
\beas
 &&\int
F(\gamma',\mu +\delta_{\gamma'} )  \, \rho(d\gamma')\tilde Q(d\mu)\\
&=&
  \int_{\Gamma^2 \times \CS}
\tilde{F}\circ \BOP \,(\gamma, \gamma',\mu) \ \Camp_Q(d\gamma, d\mu) d\gamma'\\
&\stackrel{(*)}{=}&
  \int_{\Gamma^2 \times \CS}
\tilde{F} \,(\gamma, \gamma',\mu) \, \chi_\rho (\gamma, \gamma') \
\Camp_{Q}^{(2)}(d\gamma, d\gamma',d\mu)\\
&=& \int_{\Gamma^2 \times \CS}
F(\gamma',\mu -\delta_{\gamma} )  \, \frac{1}{\rho(\gamma)} \
\Camp_{Q}^{(2)}(d\gamma, d\gamma',d\mu)\\
&=& \int_{\Gamma^2 \times \CS}
F(\gamma',\mu -\delta_{\gamma} )  \, \frac{1}{\rho(\gamma)} \
\mu^{\fac}(d\gamma, d\gamma') Q(d\mu)\\
&=& \int_{\Gamma^2 \times \CS}
F(\gamma',\mu -\delta_{\gamma} )  \, \frac{\mu(\Gamma)}{\rho(\cc-\B(\mu-\delta_\gamma))}
(\mu-\delta_\gamma)(d\gamma') \frac{\mu(d\gamma)}{\mu(\Gamma)}
Q(d\mu)\\
&=& \int_{\Gamma \times \CS}
F(\gamma',\mu )  \, \frac{\mu(\Gamma)+1}{\rho(\cc-\B(\mu))}
\mu(d\gamma') Q^-(d\mu)\\
&=& \int_{\Gamma \times \CS}
F(\gamma',\mu )  \,
\Camp_{\tilde Q}(d\gamma', d\mu) .
\eeas
\end{proof}
\begin{proof}  {\it of Proposition \ref{prop:3}}\\
Due to the fact that $Q(\B=\cc) =1$, we can reconstruct $Q$ from $Q^-$, or equivalently,  $\Camp_Q^!$ from $\Camp_{Q^-}$:
\beas
\int F(\gamma,\mu)\Camp_Q^!(d\gamma,d\mu) &=& \int F(\gamma,\mu-\delta_\gamma)\, \mu(d\gamma)Q(d\mu) \\
&=& \int F(\gamma,\mu-\delta_\gamma) \big((\mu-\delta_\gamma)(\Gamma)+1 \big)
\frac{\mu(d\gamma)}{\mu(\Gamma)}Q(d\mu) .
\eeas
Now, $\gamma= \B(\delta_\gamma)= \B(\mu)- \B(\mu-\delta_\gamma)=\cc- \B(\mu-\delta_\gamma), Q$-a.s.. Therefore
\beas
\int F(\gamma,\mu)\Camp_Q^!(d\gamma,d\mu)
&=&  \int F(\cc- \B(\mu-\delta_\gamma),\mu-\delta_\gamma) \big((\mu-\delta_\gamma)(\Gamma)+1 \big)
\frac{\mu(d\gamma)}{\mu(\Gamma)}Q(d\mu) \\
 &=&  \int F(\cc- \B(\mu),\mu) \big(\mu(\Gamma)+1 \big)
Q^-(d\mu) \\
 &=&  \int \tilde F(\mu) \,
\Camp_{Q^-}(d\gamma,d\mu),
\eeas
where $\tilde F(\mu) := \disp \frac{\mu(\Gamma)+1}{\mu(\Gamma)} \,F(\cc- \B(\mu),\mu) $.
\end{proof}
\begin{remark}
Note that identity (*) is trivially satisfied by the degenerate point process $\delta_{\underline{0}}$   carrying only  the empty configuration. In that case left and right hand sides of (*) vanish. Moreover, since that identity  is linear as function of Q, any mixture of solutions of (*) remains a solution of (*).
This is the reason  why the atomic part  on $\underline{0}$ of a solution of (*) can not be quantified by (*) and why we have to consider separately the case $\cc=0$. \\
Therefore, if the support of $Q$ is included in $\{\B=0 \}$, developing the same arguments as above on its restriction to $\{\underline{0}\}^c$ leads to its characterization:
$$
 \chi_{\rho} \ \Camp_{Q}^{(2)}  = \big( \Camp_{Q} \otimes d\gamma' \big) \circ \BOP^{-1}
%\textrm{ and } Q\big(\B=0 \big)  = 1
\quad     \Longleftrightarrow \quad   Q(\cdot \,|\{\underline{0}\}^c)= \Prho^0(\cdot \,|\{\underline{0}\}^c).
$$
\end{remark}
%%%%%%%%%%%%%%%%%%%%%%%%%%%%%%%%%%%%%%%%%%%%%%%%%%%%%%%%%%%%%%%%%%%%%%%%%%
\subsection{Application: Stochastic comparison between the pinned Poisson point process and the unpinned one}
%%%%%%%%%%%%%%%%%%%%%%%%%%%%%%%%%%%%%%%%%%%%%%%%%%%%%%%%
%%%%%%%%%%%%%%%%%%%%%%%%%%%%%%%%%%%%%%%%%%%%%%%%%%%%%%%%

Our aim in this subsection is to apply Theorem \ref{th:main} to compare stochastically the density of the points of a pinned Poisson point process $\Prhoc$ with that of an unpinned Poisson point process $\Prho$, under specific assumptions on the intensity measure $\rho$.
We first recall the concept of dominance for probability laws on $\mathbb{N}$.
\begin{definition}
Let $\probap \in \Prob(\N)$ and $\probaq \in \Prob (\N)$ be two probability measures on $\mathbb{N}$. We say that $\probap$ dominates $\probaq$ (or equivalently $\probaq$ is dominated by $\probap$) if and only if the tails of $\probap$ are larger than the tails of $\probaq$ in the sense that, for any $j \geq 1$, we have
$$
\probap (\{n \in \N: n\geq j\}) \geq \probaq (\{n \in \N: n\geq j\}) .
$$
 In that case we denote
$\probap \succeq  \probaq$ $($or $\probaq \preceq \probap)$.
\end{definition}

\begin{proposition} \label{prop:comp}
\begin{enumerate}
\item
Assume that the density function $\rho$ satisfies on $\Gamma$:
\begin{equation}\label{eq:cconvup}
\exists K >0 \quad \forall \gamma \in \Gamma, \quad \rho \ast \rho \,(\gamma) \leq K  \ \rho(\gamma).
\end{equation}	
Then, for any $\cc \neq 0 $, the law of the number of points of the process
$\Prhoc$  is dominated by $\poi ^+_{2K}$, where $\poi ^+_{2K}\in \Prob(\N^*)$ denotes the Poisson law
conditioned to be positive:
\begin{equation}\label{eq:dominup}
\Prhoc (\mu(\Gamma)= \cdot  ) \preceq \poi ^+_{2K}(\cdot):= \frac{\poi_{2K} (\cdot)}{\poi_{2K }(\N^*)} .
\end{equation}
Moreover
\begin{equation}\label{eq:dominup0}
\Prho^0 (\mu(\Gamma)= \cdot \, | \{\underline{0}\}^c ) \preceq \poi ^+_{2K}(\cdot):= \frac{\poi_{2K} (\cdot)}{\poi_{2K }(\N^*)} .
\end{equation}
\item
If $\rho$ satisfies the converse condition
\begin{equation}\label{eq:cconvlow}
\exists\quad k >0 \quad \forall \gamma \in \Gamma, \quad \rho \ast \rho \,(\gamma) \geq k \ \rho(\gamma),
\end{equation}
then for any $\cc \neq 0$,
$$
\Prhoc (\mu(\Gamma)= \cdot  ) \succeq \poi ^+_{2k}(\cdot) .
$$
\end{enumerate}
\end{proposition}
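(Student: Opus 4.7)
The plan is to derive an explicit formula for $p_n := \Prhoc(\mu(\Gamma) = n)$ and then deduce the stochastic comparisons via a monotone likelihood ratio (MLR) argument.

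I would first invoke Proposition \ref{prop:1}: $(\Prhoc)^-$ has Radon--Nikodym density proportional to $\rho(\cc - \B(\mu))/(\mu(\Gamma) + 1)$ with respect to $\Prho$. Since $Q^-$ decreases the total mass by one in law, testing this identity against $\IND_{\{\mu(\Gamma) = n-1\}}$ and using the compound-Poisson identity
$$
\Prho\bigl(\mu(\Gamma) = n,\, \B(\mu) \in d\cc\bigr) \;=\; e^{-\rho(\Gamma)}\,\frac{\rho^{*n}(\cc)}{n!}\,d\cc
$$
yields the explicit expression
$$
p_n \;=\; C\,\frac{\rho^{*n}(\cc)}{n!},\qquad n \geq 1,
$$
where $C$ is a normalizing constant depending on $\cc$. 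Equivalently, this may be read off directly from $\Prho(\mu(\Gamma) = n \mid \B(\mu) = \cc) \propto \rho^{*n}(\cc)/n!$.

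Under the upper bound condition \eqref{eq:cconvup}, a one-line induction on $n$ gives $\rho^{*n}(\gamma) \leq K \rho^{*(n-1)}(\gamma)$ for all $n \geq 2$ and every $\gamma \in \Gamma$, whence
$$
\frac{p_n}{p_{n-1}} \;=\; \frac{\rho^{*n}(\cc)}{n\,\rho^{*(n-1)}(\cc)} \;\leq\; \frac{K}{n}.
$$
Setting $q_n := \poi^+_{2K}(n) \propto (2K)^n/n!$, one has $q_n/q_{n-1} = 2K/n \geq K/n$, hence the ratio $q_n/p_n$ is non-decreasing in $n$: this is the MLR property, which classically entails stochastic dominance $p \preceq q$ and proves \eqref{eq:dominup}. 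The statement \eqref{eq:dominup0} follows from the same argument applied on the restriction $\Prho^0(\cdot \mid \{\underline{0}\}^c)$, for which the analogous formula $p_n \propto \rho^{*n}(0)/n!$ remains valid.

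The symmetric treatment of the lower bound hypothesis \eqref{eq:cconvlow} --- namely the inequality $\rho^{*n} \geq k \rho^{*(n-1)}$ leading to $p_n/p_{n-1} \geq k/n$, followed by the reverse MLR comparison --- yields the claimed stochastic domination from below. I expect the main delicate point to be the clean verification that the MLR condition does entail stochastic dominance on $\mathbb{N}^*$; an additional modest point to address is the factor of two in the dominant laws $\poi^+_{2K}$, $\poi^+_{2k}$, which for the upper bound is automatic from $\poi^+_K \preceq \poi^+_{2K}$ but for the lower bound may require either a sharper use of the functional identity of Theorem \ref{th:main} or a slight refinement of the ratio estimate above.
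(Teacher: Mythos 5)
Your route is genuinely different from the paper's and is sound. The paper does not compute $\Prhoc(\mu(\Gamma)=n)$ explicitly; instead it tests the characterizing identity (*) of Theorem \ref{th:main} against functionals $G(\gamma,\gamma',\mu)=g(\mu(\Gamma))$ depending only on the number of points, which produces directly the ratio inequality $i\,\Prhoc(\mu(\Gamma)=i)\le 2K\,\Prhoc(\mu(\Gamma)=i-1)$ for $i\ge 2$, and then concludes exactly as you do, via a lemma (Lemma \ref{lm:domination}) stating that the likelihood-ratio comparison implies domination of tails --- so the ``delicate point'' you single out is real but is settled by a short telescoping argument and need not worry you. Your alternative, namely the disintegration $\Prho(\mu(\Gamma)=n,\B\in d\cc)=e^{-\rho(\Gamma)}\rho^{*n}(\cc)\,d\cc/n!$ followed by the one-line induction $\rho^{*n}\le K\rho^{*(n-1)}$, is more elementary, bypasses Theorem \ref{th:main} entirely, and yields the \emph{sharper} bound $p_n/p_{n-1}\le K/n$, hence domination by $\poi^+_{K}$ and a fortiori by $\poi^+_{2K}$ (since $\poi^+_{K}\preceq\poi^+_{2K}$ by the same lemma). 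The only caveat on this part is to justify the identification $p_n\propto\rho^{*n}(\cc)/n!$ through the $\ve\to 0$ limit defining $\Prhoc$, which requires continuity of $\cc\mapsto\rho^{*n}(\cc)$; this is harmless under the paper's standing assumptions. The treatment of \eqref{eq:dominup0} on $\{\underline{0}\}^c$ is fine.

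Concerning the factor of two in the lower bound: your hesitation is justified, and you should not try to upgrade $\poi^+_{k}$ to $\poi^+_{2k}$. Your method delivers $n\,p_n\ge k\,p_{n-1}$ and hence $\Prhoc(\mu(\Gamma)=\cdot)\succeq\poi^+_{k}$, and that is what the argument can actually give. The paper's own factor $2$ arises from writing the right-hand side of the tested identity as $\tfrac12\int g(\mu(\Gamma))\mu(\Gamma)(\mu(\Gamma)-1)\Prhoc(d\mu)$, but $\mu^{\fac}$ is a measure on \emph{ordered} pairs with total mass $\mu(\Gamma)(\mu(\Gamma)-1)$, so no $\tfrac12$ should appear; your explicit computation confirms this, since under $\Prhoc$ conditioned on $\mu(\Gamma)=n$ one finds $E\big[\sum_{\gamma\in\mu}\rho\ast\rho(\gamma)/\rho(\gamma)\big]=n\,\rho^{*(n+1)}(\cc)/\rho^{*n}(\cc)$, which reproduces the identity with constant $1$, not $\tfrac12$. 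Moreover the constant $2k$ cannot hold in general: for $\rho=\lambda g_2$ Gaussian one has $k=\lambda/\sqrt2$ and $p_n\propto \lambda^n e^{-\cc^2/n}/(n!\sqrt n)$, whose successive ratios are strictly below $\sqrt2\lambda/n$ for small $\cc$, so $\poi^+_{2k}$ strictly dominates $\Prhoc(\mu(\Gamma)=\cdot)$ rather than the reverse. So write up part 2 with $\poi^+_{k}$ and flag the discrepancy with the statement; as an aside, the same correction sharpens part 1 to $\poi^+_{K}$.
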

\begin{proof}
We only prove the statement \eqref{eq:dominup}, the proof of \eqref{eq:cconvlow} being very similar.
Recall that, due to Theorem \ref{th:main}, for any positive test function $F$,
\begin{equation*}
E_{\Prhoc} \bigg( \int_{\Gamma^2}F(\gamma -\gamma',\gamma',\mu - \delta_{\gamma}+ \delta_{\gamma'}+\delta_{\gamma-\gamma'})\mu(d\gamma) d\gamma' \bigg)
= E_{\Prhoc}\bigg(  \int_{\Gamma^2} F(\gamma,\gamma',\mu) \chi_{\rho}(\gamma,\gamma') \mu^{\lfloor 2 \rfloor}(d \gamma ,d \gamma')\bigg)
\end{equation*}
By plugging in $G(\gamma,\gamma',\mu): = F(\gamma,\gamma',\mu) \chi_{\rho}(\gamma,\gamma')^{-1}$ we obtain
\begin{equation*}
\int \int_{\Gamma^2} \frac{G(\gamma -\gamma',\gamma',\mu - \delta_{\gamma}+ \delta_{\gamma'}+\delta_{\gamma-\gamma'})}{\chi_{\rho}(\gamma-\gamma',\gamma')} \mu(d\gamma)\, d\gamma'\,\Prhoc(d \mu)
=\int\int_{\Gamma^2} G(\gamma,\gamma',\mu)  \mu^{\lfloor 2 \rfloor}(d \gamma ,d \gamma') \Prhoc(d\mu)
\end{equation*}
If we consider functionals of the form $G(\gamma,\gamma',\mu) = g(\mu(\Gamma))$ for some measurable map $g : \N \rightarrow \mathbb{R}^+$, the right hand side of the equation above becomes
\[
\frac{1}{2} \int g(\mu(\Gamma)) \mu(\Gamma) (\mu(\Gamma)-1)  \Prhoc(d\mu) .
\]
We write the left hand side as
\begin{eqnarray*} \int g(\mu(\Gamma)+1 )
\int_{\Gamma}
\frac{1}{\rho(\gamma)} \Big(\int\rho(\gamma-\gamma')\rho(\gamma') d\gamma' \Big)\mu(d\gamma)
 =  \int g(\mu(\Gamma)+1 ) \int_{\Gamma} \frac{1}{\rho(\gamma)} \ \rho \ast \rho(\gamma) \,  \mu(d\gamma).      \end{eqnarray*}
Under assumption \eqref{eq:cconvup} the last term in the formula above is bounded by
\[
K \int g(\mu(\Gamma)+1) \mu(\Gamma) \Prhoc(d \mu)  .
\]
Therefore, we have proven that for any $g\geq 0$,
\[
\int g(\mu(\Gamma)) \mu(\Gamma) (\mu(\Gamma)-1)  \Prhoc (d\mu)  \leq 2  K \int g(\mu(\Gamma)+1) \mu(\Gamma) \Prhoc (d \mu),
\]
which is equivalent to say that for any $\bar g$ such that $\bar g(1)=0$,
\[
\int \bar g(\mu(\Gamma)) \mu(\Gamma)   \Prhoc(d\mu)  \leq 2  K \int \bar g(\mu(\Gamma)+1) \Prhoc (d \mu) .
\]
By choosing $\bar g=\mathbf{1}_{\{i\}}$, we obtain
\[
\Prhoc(\mu(\Gamma) =i) \, i \leq 2 K \, \Prhoc(\mu(\Gamma) =i-1), \quad \forall i\geq 2 .
\]
 Taking $j:=i-1$ and observing that
$$
\frac{2 K }{j+1}= \frac{\poi_{2K}(j+1)}{\poi_{2K}(j)} =
\frac{\poi ^+_{2K}(j+1)}{\poi ^+_{2K}(j)}, \quad j\geq 1,
 $$
the statement above can be rewritten as
\[
\forall j \geq 1, \quad \Prhoc(\mu(\Gamma)=j+1) \ \poi ^+_{2K} (j) \leq \Prhoc (\mu(\Gamma) =j)
\  \poi ^+_{2K} (j+1).
\]

Since $\cc \neq 0$, $ \Prhoc$ does not carry the zero measure and then $ \Prhoc(\underline{0})=0$. Thus, we can regard $\Prhoc(\mu(\Gamma) = \cdot )$ as a measure on $\mathbb{N}^*$. The desired conclusion now follows applying Lemma \ref{lm:domination} to  $\probap=\poi ^+_{2K}$ and \mbox{$\probaq := \Prhoc (\mu(\Gamma)=\cdot \, )$}.

\end{proof}
\begin{lemma}\label{lm:domination}
Let $\probap, \probaq$ be two probability laws on $\N^*$. Moreover, assume that $\probap$ is always positive. If
\[
\forall j \geq 1, \quad \probaq(j+1)\, \probap(j) \leq \probaq(j) \, \probap(j+1)
 \]
then $\probap \succeq  \probaq$.
\end{lemma}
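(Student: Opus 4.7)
The plan is to recognize the hypothesis as a monotone likelihood ratio condition and then exploit the fact that both $\probap$ and $\probaq$ are probability measures to deduce the tail comparison. Since $\probap(j) > 0$ for every $j \in \N^*$, dividing the hypothesis by $\probap(j)\probap(j+1)$ shows that the ratio $r(j) := \probaq(j)/\probap(j)$ is non-increasing on $\N^*$.

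Next, I would argue that $r$ crosses the value $1$ at most once, so that $\probaq - \probap$ is first non-negative and then non-positive. Define $j^* := \inf\{j \ge 1 : r(j) \le 1\}$, with $\inf \emptyset = +\infty$. If $j^* = +\infty$, then $r(j) > 1$ for every $j$, which contradicts the identity $\sum_{j \ge 1}\probaq(j) = 1 = \sum_{j \ge 1}\probap(j)$; hence $j^* < +\infty$. By monotonicity of $r$, we have $\probaq(j) \ge \probap(j)$ for $1 \le j < j^*$ and $\probaq(j) \le \probap(j)$ for $j \ge j^*$.

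To conclude I would split the tail inequality into two regimes. For $j \ge j^*$, the termwise inequality gives directly
\[
\sum_{n \ge j} \probaq(n) \;\le\; \sum_{n \ge j} \probap(n).
\]
For $1 \le j < j^*$, I would rewrite the tail via the normalization $\sum_{n \ge 1}\probaq(n) = \sum_{n \ge 1}\probap(n) = 1$:
\[
\sum_{n \ge j}\bigl(\probaq(n) - \probap(n)\bigr) \;=\; -\sum_{n=1}^{j-1}\bigl(\probaq(n) - \probap(n)\bigr) \;\le\; 0,
\]
where the last inequality holds because each index $n \le j-1 < j^*$ satisfies $\probaq(n) \ge \probap(n)$. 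This yields $\probap \succeq \probaq$.

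The argument is essentially elementary; the only point that requires mild care is the book-keeping at the threshold, in particular the degenerate cases $j^* = 1$ (in which the first regime is empty and the conclusion is termwise) and $r \equiv 1$ (in which $\probap = \probaq$ and equality holds everywhere). I do not expect any serious obstacle.
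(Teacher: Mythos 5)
Your proof is correct, and it takes a genuinely different route from the paper's. The paper works with the ratio of tails: it sets $g(j) := \sum_{k \ge j}\probaq(k)\big/\sum_{k \ge j}\probap(k)$, notes $g(1)=1$, and shows $g$ is decreasing by manipulating the cross-ratio inequality $\probaq(k)/\probaq(j) \le \probap(k)/\probap(j)$; since this requires dividing by $\probaq(j)$, the paper first assumes $\probaq$ is everywhere positive and defers the general case to ``a simple approximation argument.'' You instead use the classical single-crossing consequence of a monotone likelihood ratio: $r(j)=\probaq(j)/\probap(j)$ is non-increasing, it must dip to or below $1$ at some finite $j^*$ by normalization, and the tail comparison then splits cleanly into a termwise regime ($j \ge j^*$) and a complementary-sum regime ($j < j^*$). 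A concrete advantage of your version is that you only ever divide by $\probap$, which is assumed positive, so the argument covers the case where $\probaq$ vanishes somewhere without any approximation step (note that the hypothesis forces $\probaq$ to remain zero once it hits zero, which is consistent with your monotone $r$). Both proofs are elementary and of comparable length; yours is arguably the more self-contained of the two.
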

\begin{proof}
Suppose first that both laws are positive, the general case following with a simple approximation argument. In that case we can rewrite the assumption as
\begin{equation}\label{eq:domination1}
\forall \, j \geq 1, \quad  \frac{\probaq(j+1)}{\probaq(j)} \leq \frac{\probap(j+1)}{\probap(j)} \quad
\Longrightarrow
\quad
\forall \,  k \geq j \geq 1 \quad \frac{\probaq(k)}{\probaq(j)} \leq \frac{\probap(k)}{\probap(j)}.
\end{equation}
We have to show that for all $j\geq 1$, $\sum_{k \geq j}\probap(k) \geq \sum_{k \geq j}\probaq(k)$. To do this it is sufficient to show that the function $g$ defined by
\[g:\N^* \rightarrow \mathbb{R}_{+}, \quad g(j) := \frac{\sum_{k \geq j }\probaq(k)}{ \sum_{k \geq j} \probap(k)} \]
is decreasing, and to remark that $g(1)=1$.
To show that $g$ is decreasing we observe that
\begin{eqnarray*}
g(j+1)-g(j) \leq 0 & \Leftrightarrow & \sum_{k \geq j+1}\probaq(k)\sum_{l \geq j}\probap(l)-\sum_{k \geq j+1}\probap(k)\sum_{l \geq j}\probaq(l) \leq 0 \\
&\Leftrightarrow & \probap(j)\sum_{k \geq j+1}\probaq(k)-\probaq(j)\sum_{k \geq j+1} \probap(k) \leq 0 \\
&\Leftrightarrow &\sum_{k \geq j+1}\frac{\probaq(k)}{\probaq(j)} \leq \sum_{k \geq j+1}\frac{\probap(k)}{\probap(j)}.
\end{eqnarray*}
This last condition is directly implied by \eqref{eq:domination1}.
\end{proof}
The following statement provides us the information about the expected number of points of the pinned process.
\begin{corollary}\label{cor:comp_exp}
1. Assuming that condition (\ref{eq:cconvup}) holds true and $\cc \neq 0$. Then
$$
\EPrhoc (\mu(\Gamma)= \cdot \ ) \leq \frac{2K}{1-e^{-2K}}
$$
Moreover
$$
\EPrhoo (\mu(\Gamma)= \cdot \ | \{\underline{0}\}^c) \leq \frac{2K}{1-e^{-2K}}
$$
2. Assuming that condition (\ref{eq:cconvlow}) holds true and $\cc \neq 0$. Then
$$
\EPrhoc (\mu(\Gamma)= \cdot \  ) \geq \frac{2k}{1-e^{-2k}} .
$$
Moreover
$$
\EPrhoo (\mu(\Gamma)= \cdot \ | \{\underline{0}\}^c) \geq \frac{2k}{1-e^{-2k}}
$$
\end{corollary}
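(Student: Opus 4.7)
The corollary is essentially a direct consequence of Proposition~\ref{prop:comp}, combined with the standard fact that stochastic domination in the tail-ordering sense implies the corresponding ordering of expectations. My plan has three main steps.

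First, I would recall the tail-domination conclusion from Proposition~\ref{prop:comp}: under assumption \eqref{eq:cconvup} (respectively \eqref{eq:cconvlow}) and for $\cc \neq 0$, the law of $\mu(\Gamma)$ under $\Prhoc$ is dominated by (respectively dominates) $\poi^+_{2K}$ (respectively $\poi^+_{2k}$), and similarly for $\Prho^0(\cdot \mid \{\underline{0}\}^c)$. I would then invoke the classical equivalence: if $\probap \succeq \probaq$ on $\N^*$ then for every non-decreasing $h : \N^* \to \R_+$ one has $\sum_j h(j)\, \probaq(j) \leq \sum_j h(j)\, \probap(j)$. Applied to $h(j)=j$ this yields the expectation bound in the direction we need. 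This standard implication can be obtained by Abel summation:
\begin{equation*}
\sum_{j\geq 1} j\, \probaq(j) = \sum_{j\geq 1} \probaq(\{n \geq j\}) \leq \sum_{j\geq 1}\probap(\{n\geq j\}) = \sum_{j\geq 1} j\, \probap(j),
\end{equation*}
which is immediate from the definition of the ordering $\preceq$.

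Second, I would carry out the explicit computation of the mean of a positive Poisson variable. For $\lambda>0$,
\begin{equation*}
\sum_{n\geq 1} n\, \poi^+_\lambda(n) = \frac{1}{1-e^{-\lambda}} \sum_{n\geq 1} n\, e^{-\lambda}\frac{\lambda^n}{n!} = \frac{\lambda}{1-e^{-\lambda}}.
\end{equation*}
Applying this with $\lambda=2K$ (for the upper bound) and $\lambda=2k$ (for the lower bound) immediately produces the constants $\frac{2K}{1-e^{-2K}}$ and $\frac{2k}{1-e^{-2k}}$ appearing in the statement.

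Third, combining the two ingredients gives both parts: in case 1, $\Prhoc(\mu(\Gamma)=\cdot)\preceq \poi^+_{2K}$ implies $\EPrhoc(\mu(\Gamma)) \leq 2K/(1-e^{-2K})$, and the same argument applied to $\Prho^0(\cdot\mid \{\underline{0}\}^c)$ gives the corresponding bound there; in case 2, $\Prhoc(\mu(\Gamma)=\cdot)\succeq \poi^+_{2k}$ gives the reverse inequality with $2k$ in place of $2K$. There is really no obstacle here beyond verifying the expectation formula for $\poi^+_\lambda$ and the elementary fact linking $\preceq$ to expectations of non-decreasing functions; the substance of the argument was already carried out in Proposition~\ref{prop:comp} via Theorem~\ref{th:main}.
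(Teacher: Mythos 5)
Your proof is correct and follows essentially the same route as the paper: both rely on the tail-sum identity $\mathrm{E}(X)=\sum_{j\geq 1}\mathrm{P}(X\geq j)$ to pass from the tail domination of Proposition \ref{prop:comp} to an ordering of expectations, and then compute the mean of the conditioned Poisson law $\poi^+_\lambda$ explicitly as $\lambda/(1-e^{-\lambda})$. No issues to report.
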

\begin{proof}
The statement follows immediately from the fact that for a non-negative discrete-valued random variable $X$ the expectation rewrites as
$$
\mathrm{E}(X)=\sum_{j \geq 1} \mathrm{P}(X \geq j)
$$
and that $\poi_{2K }(n)=\frac{(2K)^{n}}{n!}\frac{e^{-2K}}{1-e^{-2K}}.$
\end{proof}

We will discuss in Section \ref{sec:compLevyBridges} several examples of measures $\rho$ satisfying  condition \eqref{eq:cconvup} and/or  condition \eqref{eq:cconvlow}.\\

One can generalize Proposition \ref{prop:comp} by comparing the random number of points inside of any cone of $\Gamma$ under the Poisson point process and its pinned version, as follows.

Fix a  cone $\K$ with positive Lebesgue measure. We define a convolution operation  $\stackrel{(\K)}{\ast}$ of a function $\rho$ with itself on the cone $\K$ as follows:
\begin{equation} \label{convolution_on_cone}
\rho \stackrel{(\K)}{\ast} \rho \,(\gamma) = \int_{\K\cap (\gamma - \K)} \rho(\gamma')\rho(\gamma-\gamma') \, d \gamma' , \quad \gamma \in \K .
\end{equation}
Let us remark that if $\gamma \in \K$, then the set $\K \cap (\gamma - \K)$ has positive Lebesgue measure as well, so that $\rho \stackrel{(\K)}{\ast} \rho \, (\gamma)>0$.
% Moreover, note that if $\K=\Gamma$, we get back the usual convolution.\\
We can now express the following result.
\begin{proposition}
Suppose the density function $\rho$ satisfies:
\begin{equation}\label{eq:convup-cone}
\exists K>0 \quad \forall \gamma \in \K, \quad \rho \stackrel{(\K)}{\ast} \rho(\gamma) \leq K \ \rho(\gamma),
\end{equation}	
and let $\mu(\K)$ be the random number of points of $\mu$ in $\K$. Then, for any $\cc \neq 0$, the law of $\mu(\K)$ under $\Prhoc$ is dominated by $\poi ^+_{2K}$.\\
Conversely, if
\begin{equation}\label{eq:convlow-cone}
\exists k >0 \quad \forall \gamma \in \K, \quad \rho \stackrel{(\K)}{\ast} \rho(\gamma) \geq k \ \rho(\gamma),
\end{equation}
then, for any $\cc \neq 0$,  the law of $\mu(\K)$ under $\Prhoc$ dominates $\poi ^+_{2k}$.
\end{proposition}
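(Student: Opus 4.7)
The plan is to repeat the argument of Proposition~\ref{prop:comp} \emph{mutatis mutandis}, applying the characterization (*) of $\Prhoc$ given by Theorem~\ref{th:main} but with test functions that only see atoms lying inside $\K$. The only genuinely new ingredient is the (implicit) fact that $\K$ is closed under addition: since $\K$ is a convex cone, $\gamma',\gamma-\gamma'\in\K$ automatically forces $\gamma=\gamma'+(\gamma-\gamma')\in\K$. This ensures that when the splitting map $\BOP$ produces a configuration whose two inserted atoms lie in $\K$, the removed atom also lies in $\K$, so the net change in the number of atoms in $\K$ is exactly $+1$.

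Concretely, I would start from the identity obtained at the beginning of the proof of Proposition~\ref{prop:comp} (i.e.\ (*) tested against $F\chi_\rho^{-1}$ under $Q=\Prhoc$) and plug in the test function
\[
G(\gamma,\gamma',\mu):=g(\mu(\K))\,\mathbf{1}_\K(\gamma)\,\mathbf{1}_\K(\gamma'),
\]
for a generic measurable $g:\N\to\R_+$. On the right-hand side one obtains $\int g(\mu(\K))\,\mu(\K)(\mu(\K)-1)\,\Prhoc(d\mu)$, since $\int_{\K^2}\mu^{\fac}=\mu(\K)(\mu(\K)-1)$. On the left-hand side, rewriting $\chi_\rho^{-1}(\gamma-\gamma',\gamma')=\rho(\gamma-\gamma')\rho(\gamma')/\rho(\gamma)$ and using the cone observation to force $\gamma\in\K$ and restrict $\gamma'$ to $\K\cap(\gamma-\K)$, while noting that the transformed configuration has exactly $\mu(\K)+1$ atoms in $\K$, one recognizes \eqref{convolution_on_cone} and gets
\[
\int g(\mu(\K)+1)\int_\K \frac{\rho\stackrel{(\K)}{\ast}\rho(\gamma)}{\rho(\gamma)}\,\mu(d\gamma)\,\Prhoc(d\mu),
\]
which by \eqref{eq:convup-cone} is dominated by $K\int g(\mu(\K)+1)\,\mu(\K)\,\Prhoc(d\mu)$.

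From this point the remaining manipulations are identical to those of Proposition~\ref{prop:comp}: choosing $g(n)=\bar g(n)/(n-1)$ with $\bar g(1)=0$, then $\bar g=\mathbf{1}_{\{i\}}$, produces the three-term recursion $i\,\Prhoc(\mu(\K)=i)\leq 2K\,\Prhoc(\mu(\K)=i-1)$ for $i\geq 2$, which is precisely the ratio hypothesis of Lemma~\ref{lm:domination} with $\probap=\poi^+_{2K}$. The reverse inequality under \eqref{eq:convlow-cone} is strictly analogous, with $K$ replaced by $k$ and the inequality reversed at each step.

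The one bookkeeping obstacle I expect to spend some care on is that, unlike in Proposition~\ref{prop:comp} where $\cc\neq 0$ forced $\mu(\Gamma)\geq 1$ almost surely, here $\Prhoc(\mu(\K)=0)$ may be strictly positive, so $\Prhoc(\mu(\K)=\cdot)$ lives on $\N$ rather than $\N^*$. I would circumvent this by first applying Lemma~\ref{lm:domination} to the conditional law $\Prhoc(\mu(\K)=\cdot\mid\mu(\K)\geq 1)\in\Prob(\N^*)$ (for which the ratio condition is preserved since the conditioning constant cancels pointwise), and then transferring the tail comparison to the unconditional law via the trivial observation that reintroducing the atom at $0$ only decreases tail probabilities at levels $j\geq 1$.
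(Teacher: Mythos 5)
Your proposal follows exactly the route the paper intends: the paper itself omits this proof with the remark that it is ``very similar'' to that of Proposition~\ref{prop:comp}, and your localisation of the test function to $\K$, the observation that a convex cone is closed under addition (so that $\gamma'\in\K$ and $\gamma-\gamma'\in\K$ force $\gamma\in\K$, whence the splitting changes $\mu(\K)$ by exactly $+1$), and the identification of the inner integral over $\K\cap(\gamma-\K)$ with $\rho\stackrel{(\K)}{\ast}\rho(\gamma)$ from \eqref{convolution_on_cone} are precisely the new ingredients required. The upper-bound half is essentially complete, modulo the same factor-$\tfrac12$ bookkeeping as in the paper's proof of Proposition~\ref{prop:comp} (your displayed right-hand side has no $\tfrac12$, yet you land on $2K$; you should either keep the paper's convention consistently or accept the constant $K$ your own computation yields).

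There is, however, a genuine gap in the lower-bound half, at exactly the step you dismiss as ``trivial''. Passing from the conditional law $\Prhoc(\mu(\K)=\cdot\mid\mu(\K)\geq 1)$ back to the unconditional one multiplies every tail probability by $\Prhoc(\mu(\K)\geq 1)\leq 1$; this \emph{decreases} tails, which is harmless for the domination by $\poi^{+}_{2K}$ but goes the wrong way for the claimed domination of $\poi^{+}_{2k}$. Indeed, taking $j=1$ in the definition of $\succeq$, the unconditional statement would force $\Prhoc(\mu(\K)\geq 1)=1$, which fails whenever $\Prhoc$ charges configurations with no atom in $\K$ (for instance $d=1$, $\K=\RG$, $\cc<0$, and $\rho$ supported on all of $\R$). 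So in the second half your recursion only yields that the \emph{conditional} law $\Prhoc(\mu(\K)=\cdot\mid\mu(\K)\geq 1)$ dominates $\poi^{+}_{2k}$ --- which is consistent with how the paper phrases the applications in Section~\ref{sec:compLevyBridges}, where domination from below is always stated for processes conditioned to have at least one jump --- and you should state the conclusion in that conditional form rather than assert that the transfer works identically in both directions.
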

\begin{proof}
The proof is very similar to the one of Proposition \ref{prop:comp}, therefore it is omitted.
\end{proof}

%%%%%%%%%%%%%%%%%%%%%%%%%%%%%%%%%%%%%%%%%%%%%%%%%%%%%%%%%%%%%%%%%%%%%%%%
\section{L\'evy bridge associated with a diffuse jump measure} \label{sec:3}
%%%%%%%%%%%%%%%%%%%%%%%%%%%%%%%%%%%%%%%%%%%%%%%%%%%%%%%%%%%%%%%%%%%%%%%%%%%%%
%%%%%%%%%%%%%%%%%%%%%%%%%%%%%%%%%%

Our main interest is now to consider pure jump L\'evy processes and their bridges, see e.g. \cite{PrZamb04,fitzsimmons1995occupation,mansuy2005harnesses,hoyle2011levy} for their construction and their application in various frameworks.
The canonical space is $\Omega:= \mathbb{D}(I;\cX)$, the  c\`adl\`ag paths defined on $I:=[0,1]$ with values in $\cX$.
So, to rely with the above formalismus we associate canonically to any path $Z \in \Omega$ the (jump)  point measure on $\cXt : = I \times \cX$ given by
$$
\mu_Z:= \sum_{t : \Delta Z_t \neq 0} \delta_{(t, \Delta Z_t)} \in \CSt.
$$
For the sake of simplicity we only state our results for one-dimensional processes ($d=1$). Nevertheless they hold also for multidimensional processes because we require the L\'evy measure to have a density with respect to the Lebesgue measure.\\

We suppose in the whole section that the jump measure is finite, which means that we are dealing with compound Poisson processes. The generalization to an infinite jump measure is postponed to the Remark \ref{rem:JumpMeasInfinite}.

%%%%%%%%%%%%%%%%%%%%%%
\subsection{Characterization of  L\'evy bridges }
%%%%%%%%%%%%%%%%%%%%%%%

We first define how to split a canonical path $Z \in \Omega$ in replacing one of its jumps, say $\Delta Z_{t}$, by two other jumps at other times.
\begin{definition}[Path jump splitting] \label{def:Theta}
Let $Z\in \Omega$ be a path and let $\gamma=(t , \Delta Z_{t})\in \cXt$ be a jump time and a jump size of $Z$ or with other words, an atom of $ \mu_Z$. For $\gamma_1=(s_1,x_1) \in \cXt$ and $\gamma_2=(s_2,x_2) \in \cXt$ we define the splitting map $\Theta_{\gamma,\gamma_1,\gamma_2}$ on paths   as follows:
\be
\Theta_{\gamma,\gamma_1,\gamma_2} Z =  Z- \Delta Z_{t} \IND_{[t,1]} + x_{1} \IND_{[s_1,1]} +
x_2 \IND_{[s_2,1]} .
\ee
\end{definition}
This transformation corresponds at the level of point measures on $\cXt$ to the splitting of an atom $\gamma \in \mu_Z$ into the two atoms $\gamma_{1},\gamma_{2}$. \\
More precisely, we are interested in transformations such that the resulting global jump size of $Z$ stays unchanged. So the new jump sizes, $x_1$ and $x_2$, have to satisfy $x_1 + x_2 =\Delta Z_{t}$. Moreover, choosing times and sizes of the new jumps uniformly at random, we define the following operator.

\begin{definition}[Uniform jump split ] \label{def:ADP}
The operator $\ADP$, acting on non negative functionals $F$ on $\cXt^{3}\times\Omega$, is defined by:
$$\begin{aligned}
\ADP &F \, (Z )= \sum_{t: \Delta Z_t \neq 0} \int_{\tilde{\Gamma}^2}
F((t, \Delta Z_t),\gamma_{1},\gamma_2,\Theta_{(t, \Delta Z_t),\gamma_1,\gamma_2} Z) \,
% \sigma(x_{1})
dx_{1} \delta_{\Delta Z_{t}-x_{1}}(dx_2)  ds_{1} ds_{2}
\\&\text{where}\quad \gamma_{1}:=(s_{1},x_{1}) \text{ and }\gamma_{2}:=(s_{2},x_2).
\end{aligned}$$
\end{definition}
This transformation cancels, one after the other, each jump of the path $Z$ and replace it by two jumps whose sizes add up to the size of the removed jump.

\begin{proposition}
Let $\Pnu$ be the pure jump L\'evy process with L\'evy  measure $\nu(dx)$ supposed to be finite and diffuse with positive density function $\nu(x)$. Let $\Enu$ denote the expectation under $\Pnu$. Then we have for any non negative test functional $F $ on $\cXt^{3}\times\Omega$,
\begin{multline} \label{e3}
\Enu \left[ \ADP F\right] =\\
 \Enu \bigg[ \sum_{\substack{s_{1}\neq s_{2}: \Delta Z_{s_{1}}\not = 0\\ \Delta Z_{s_{2}}\not = 0}}
\big[ \int_{I}
F\big((t, \Delta Z_{s_{1}}+\Delta Z_{s_{2}}),(s_{1},\Delta Z_{s_{1}}),(s_{2},\Delta Z_{s_{2}}),Z\big)dt\big]  \, \chi_{\nu}(\Delta Z_{s_{1}},\Delta Z_{s_{2}})\bigg]
\end{multline}
where the function $\chi_\nu$ is defined as in \eqref{eq:loopcharact} by
$
\chi_{\nu}(x_{1},x_{2}):= \displaystyle \frac{\nu(x_{1}+x_{2})}{\nu(x_{1}) \nu( x_{2} )}.
$
\end{proposition}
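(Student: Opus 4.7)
The plan is to reduce the identity \eqref{e3} to two applications of Mecke's formula for the Poisson point process on $\cXt = I \times \cX$ of intensity $dt \otimes \nu(dx)$, which is precisely the law of the jump point measure $\mu_Z$ under $\Pnu$ (this follows from the fact that $\nu$ is a finite measure, so that $Z$ is a compound Poisson process). First, I would rewrite the sum $\sum_{t:\Delta Z_t \neq 0}$ in the definition of $\ADP F(Z)$ as an integral $\int \cdot\, \mu_Z(dt,dx)$, and observe that the transformed path satisfies $\mu_{\Theta_{(t,x),\gamma_1,\gamma_2} Z} = \mu_Z - \delta_{(t,x)} + \delta_{\gamma_1} + \delta_{\gamma_2}$. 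This turns $\Enu[\ADP F]$ into a Campbell-type integral on $\cXt \times \Omega$.

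Next, I would apply the (reduced) Slivnjak--Mecke identity \eqref{eq:Mecke} for $\mu_Z$ to remove the atom $(t,x)$: the measure $\mu_Z(dt,dx)\otimes \Pnu(dZ)$ is transported to $dt\,\nu(dx)\otimes \Pnu(dZ)$ with the subtracted $\delta_{(t,x)}$ cancelling, leaving $F$ evaluated on the path whose jump measure is $\mu_Z + \delta_{\gamma_1} + \delta_{\gamma_2}$. Then I would perform the change of variables $(x, x_1, x_2) \mapsto (x_1, x_2)$ with $x = x_1 + x_2$, which collapses the Dirac $\delta_{x - x_1}(dx_2)$ against $\nu(dx)$ to produce the weight $\nu(x_1 + x_2)\,dx_1\,dx_2$. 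Multiplying and dividing by $\nu(x_1)\nu(x_2)$ rewrites this as $\chi_\nu(x_1, x_2)\,\nu(dx_1)\,\nu(dx_2)$, while the time coordinates enter through $ds_1\,ds_2$.

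Finally, I would apply the bivariate Mecke formula \eqref{eq:Meckebivariate} in the reverse direction to fold the two new atoms $\gamma_1 = (s_1, x_1)$ and $\gamma_2 = (s_2, x_2)$ back into $\mu_Z$: the measure $(ds_1\,\nu(dx_1))(ds_2\,\nu(dx_2))\otimes\Pnu(dZ)$ is transported to $\mu_Z^{\fac}(d\gamma_1, d\gamma_2)\otimes \Pnu(dZ)$, with the added atoms vanishing so that $F$ is now evaluated on the original path $Z$. Expanding $\mu_Z^{\fac}$ as a sum over ordered pairs of distinct jumps recovers exactly the right-hand side of \eqref{e3}, the remaining $dt$-integral producing the $\int_I \cdots\,dt$ factor.

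The main obstacle is bookkeeping: one must track how the single atom $\delta_{(t,x)}$ removed by the first Mecke application cancels against the $+\delta_{(t,x)}$ already produced by $\Theta$, and simultaneously how the two extra atoms $\delta_{\gamma_1}, \delta_{\gamma_2}$ become absorbed when the bivariate Mecke formula is run in reverse. The crucial algebraic identity enabling the whole passage is the factorization $\nu(x_1+x_2) = \chi_\nu(x_1, x_2)\,\nu(x_1)\,\nu(x_2)$, which is precisely what converts the Lebesgue-times-Dirac measure emerging from $\ADP$ into the $\nu\otimes\nu$ measure required by bivariate Mecke.
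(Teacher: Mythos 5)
Your proposal is correct and follows essentially the same route as the paper's proof: identify the law of $\mu_Z$ as a Poisson point process on $I\times\Gamma$ with intensity $dt\otimes\nu(dx)$, apply Mecke's formula \eqref{eq:Mecke} to remove the split atom, perform the change of variables $y=x_1+x_2$ collapsing the Dirac so that $\nu(x_1+x_2)=\chi_\nu(x_1,x_2)\nu(x_1)\nu(x_2)$ appears, and then run the bivariate Mecke formula \eqref{eq:Meckebivariate} in reverse to recover $\Camp_{\Pnu}^{(2)}$, i.e.\ the sum over ordered pairs of distinct jumps. No substantive differences.
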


\begin{proof}
We recognize the expectation of the random sum in the right hand-side as the integral with respect to the second-order factorial Campbell measure $\Camp_{\Pnu}^{(2)}$
and follow the same way as in the proof of Proposition \ref{prop:loopcharact}. Starting with the left hand-side, after integrating under $\delta_{\Delta Z_{t}-x_{1}}(dx_2) $, one gets
\begin{multline}
\Enu \left[ \ADP F\right] =\nonumber\\
 \int_{I\times\cXt^2 \times \mathcal{M}(\tilde{\Gamma})}
F\Big((t,\Delta Z_{t}),(s_{1},x_{1}),(s_{2}, \Delta Z_{t}-x_{1}),
 Z- \Delta Z_{t} \IND_{[t,1]} + x_{1} \IND_{[s_1,1]} +
(\Delta Z_{t}-x_{1}) \IND_{[s_2,1]}\Big)\,\nonumber\\
 dx_{1} ds_{1} ds_{2}\, \mu_Z (d\gamma) \Pnu(d\mu_{Z}).\nonumber
\end{multline}
By Mecke's formula we can rewrite the integral under the intensity measure of $\mu_{Z}(d\gamma),$ that is under $\nu(y)dydt$
\begin{multline}
 \int_{\mathcal{M}(\tilde{\Gamma})}\int_{\cXt} \int_{I\times\cXt} F\big((t,y),(s_{1},x_{1}),(s_{2}, y-x_{1}), Z + x_{1} \IND_{[s_1,1]} +
(y-x_{1}) \IND_{[s_2,1]}\big) \nonumber\\
dx_{1} \,ds_{1} \,ds_{2}\, \nu(y)\,dy\,dt\, \Pnu(d\mu_{Z}).\nonumber
\end{multline}
Now we change the order of integration so that we first integrate in $y$ and then we change the variable setting  $y=x_1 +x_2.$ This results to the following expression
\begin{multline}
 \int_{\mathcal{M}(\tilde{\Gamma})} \int_{I\times\cXt} \int_{\cXt} F\big((t,x_1 + x_2),(s_{1},x_{1}),(s_{2}, x_2 ), Z + x_{1} \IND_{[s_1,1]} +
x_{2} \IND_{[s_2,1]}\big)\nonumber\\
\nu( x_1 + x_2 )dx_{2}\, dt\, dx_{1}\, ds_{1}\, ds_{2}\, \Pnu(d\mu_{Z}).\nonumber
\end{multline}
Further we divide and multiply by $\nu(x_{1})\nu(x_{2})$ and recognise the terms which correspond to the function $\chi_{\nu},$ and also the intensity measures $\nu(x_{i})\, dx_{i}\, ds_{i}$, $i=1,2,$ which are involved in the bivariate Mecke formula (\ref{eq:Meckebivariate}). We apply the latter and obtain
\begin{eqnarray}
&& \int_{\mathcal{M}(\tilde{\Gamma})} \int_{\cXt^{2}} \int_{I} F\big((t,x_1 + x_2),(s_{1},x_{1}),(s_{2}, x_2 ), Z \big)dt\,
\chi_{\nu}( x_1 , x_2 ) \Camp_{\Pnu}^{(2)} (d\gamma_1,d\gamma_2,d\mu).\nonumber
\end{eqnarray}
By the definition of the second order Campbell measure, this rewrites to the expression on the right hand-side of \eqref{e3}, which ends the proof.
\end{proof}

As in Corollary \ref{cor:densite} we can reformulate this result as the absolute continuity with respect to $\Pnu$ of the image measure of $\Pnu$ under the splitting operator $\ADP$. Choosing test functions in \eqref{e3} of the form
$$
F(\gamma,\gamma_{1},\gamma_{2},Z):=
\varphi(x_1) \,
\frac{\IND_{\#\{t : \Delta Z_t \neq 0\}>1}}{\#\{t : \Delta Z_t \neq 0\} - 1} \, \tilde F (Z)
$$
where  $\varphi$ is a probability density on $\Gamma$,
and applying $\ADP$ we indeed
split one randomly chosen jump of any path (having at least two jumps) into two new jumps, the random size of the first one following a law with density $\varphi$. Thus, one obtains the following result.
\begin{corollary}\label{cor:dencite_levy}
For any test function $\tilde F $ for which $\ADP \tilde F$ is $\Pnu$-integrable,
\be\label{e4}
\Enu\Big[\ADP \tilde F(Z)\Big]=
\Enu \Big[ \tilde F (Z)\, {\bf D}_\nu(Z) \Big]
\ee
where
\be
{\bf D}_\nu(Z) := \frac{\IND_{\#\{t : \Delta Z_t \neq 0\}>1}}{\#\{t : \Delta Z_t \neq 0\}-1}
 \sum_{\substack{s_{1}\neq s_{2}: \Delta Z_{s_{1}}\not = 0\\ \Delta Z_{s_{2}}\not = 0}} \varphi(\Delta Z_{s_{1}}) \chi_{\nu}(\Delta Z_{s_{1}},\Delta Z_{s_{2}}).
\ee
\end{corollary}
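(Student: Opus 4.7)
The plan is to apply the identity \eqref{e3} of the preceding proposition to a specific test function, mirroring exactly the derivation of Corollary \ref{cor:densite} from Proposition \ref{prop:loopcharact} in the Poissonian setting. Concretely, I would set
$$F(\gamma,\gamma_1,\gamma_2,Z) := \varphi(x_1) \, \frac{\IND_{N(Z)>1}}{N(Z)-1} \, \tilde F(Z),$$
where I abbreviate $N(Z) := \#\{t : \Delta Z_t \neq 0\}$ and $\gamma_i = (s_i, x_i)$. The combinatorial prefactor is chosen precisely so that, once this $F$ is inserted into the two sides of \eqref{e3}, we recover exactly the claimed equality.

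On the right-hand side of \eqref{e3} the path $Z$ is not transformed, so the factor $\IND_{N(Z)>1}(N(Z)-1)^{-1}\tilde F(Z)$ factors out of the double sum indexed by $(s_1,s_2)$. In the summand the variable $x_1$ entering $\varphi$ is evaluated at $\Delta Z_{s_1}$, and since $\int_I dt = 1$, what remains is exactly $\Enu\bigl[\tilde F(Z)\, {\bf D}_\nu(Z)\bigr]$ by the definition of ${\bf D}_\nu$.

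On the left-hand side, $F$ is evaluated at the split path $\Theta Z$ instead of $Z$. The key observation is that $ds_1\,ds_2\,dx_1$-almost everywhere one has $N(\Theta Z) = N(Z)+1$: one jump at $t$ is removed, two new jumps of sizes $x_1$ and $\Delta Z_t - x_1$ are inserted at distinct times $s_1,s_2$, and the pathological configurations (new times coinciding with each other or with a pre-existing jump time of $Z$, or a new jump size vanishing) form a Lebesgue-null subset of the integration domain. Thus $\IND_{N(\Theta Z)>1}=1$ whenever the outer sum over $t$ is non-empty, and $N(\Theta Z) - 1 = N(Z)$, so that the prefactor coming from $F$ collapses to $1/N(Z)$. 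This turns $\sum_{t : \Delta Z_t \neq 0}$ into an average over a uniformly chosen jump of $Z$, producing exactly the operator $\ADP \tilde F$ in the sense described in the paragraph preceding the corollary.

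The main bookkeeping obstacle is the almost-everywhere identification $N(\Theta Z) = N(Z)+1$, which relies on the diffuseness of the Lebesgue reference measures for $s_1, s_2, x_1$; nothing else in the argument is delicate, since the clever choice of $F$ reduces the corollary to a direct term-by-term reading of the two sides of \eqref{e3}.
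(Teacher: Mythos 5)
Your proposal is correct and is exactly the paper's argument: the text preceding the corollary prescribes precisely this test function $F(\gamma,\gamma_1,\gamma_2,Z)=\varphi(x_1)\,\IND_{N(Z)>1}(N(Z)-1)^{-1}\tilde F(Z)$ to be inserted into \eqref{e3}, with the right-hand side collapsing to $\Enu[\tilde F\,\mathbf{D}_\nu]$ since $\int_I dt=1$ and the left-hand side becoming the uniform-jump-split operator. Your explicit justification of the almost-everywhere identity $N(\Theta Z)=N(Z)+1$ is a detail the paper leaves implicit, and it is the right one.
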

\vspace{3mm}
Following the agenda of the previous section we revisit the identity \eqref{e3} and prove that indeed it characterizes bridges of the pure jump L\'evy process $\Pnu$.  \\
Consider the family $(\Pnuxy ,x,y\in\cX) $ of bridges of the L\'evy process $\Pnu$ between time $0$ and time $1$.  They can be constructed as a regular version of the family of  conditional laws $\Pnu (\cdot \,|Z_{0}=x, Z_{1}=y),\ x,y\in\cX,$ see \cite[Proposition 3.1]{PrZamb04}.
We then obtain the following result.
\begin{theorem}\label{thm:reciproc_of_levy_fin}
The identity \eqref{e3} remains valid under any bridge $\Pnuxy $ of the pure jump L\'evy process $\Pnu$.
Reciprocally, consider a pure jump  process $Q$ pinned at time 0 and 1 to two values $x \not = y$, that is $Q(Z_0=x)=Q(Z_1=y)=1$.  If the following identity holds
\begin{multline} \label{e7}
E_{Q} \left[ \ADP \Phi\right] =\\
 E_{Q} \bigg[\sum_{\substack{s_{1}\neq s_{2}: \Delta Z_{s_{1}}\not = 0\\ \Delta Z_{s_{2}}\not = 0}}
\int_{I}
\Phi((t, \Delta Z_{s_{1}}+\Delta Z_{s_{2}}),(s_{1},\Delta Z_{s_{1}}),(s_{2},\Delta Z_{s_{2}}),Z)dt\, \chi_{\nu}(\Delta Z_{s_{1}},\Delta Z_{s_{2}})\bigg]
\end{multline}
 then $Q$ coincides with the bridge $\Pnuxy$.\\
 If the pure jump  process $Q$ is pinned at time 0 and 1 to the same value $x$ (that is it carries only loops which start and end in $x$) and satisfies Identity \eqref{e7} then
 $$
Q( \, \cdot \,|\#\{t : \Delta Z_t \neq 0\}\geq 1) = \Pnuxx ( \, \cdot \,|\#\{t : \Delta Z_t \neq 0\}\geq 1).
 $$
 With other words, $Q$ and the bridge $\Pnuxx$ coincide on the set of non constant paths.
\end{theorem}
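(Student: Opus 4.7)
The plan is to transport Theorem \ref{th:main} to the space-time setting via the canonical bijection $Z \leftrightarrow \mu_Z$ between pure jump paths and finite point measures on $\cXt = I \times \cX$. Under $\Pnu$ the point measure $\mu_Z$ is a Poisson point process on $\cXt$ with intensity $dt \otimes \nu(dx)$, and pinning $Z_0 = x$, $Z_1 = y$ amounts to fixing the spatial component of its first moment, $\sum_{(t,\Delta Z_t) \in \mu_Z} \Delta Z_t = y - x$. The splitting operator $\ADP$ of Definition \ref{def:ADP} leaves this spatial first moment invariant and mirrors $\BOP$ from Section \ref{sec:2.2}: the two new time coordinates $s_1, s_2$ are integrated against Lebesgue measure on $I$, playing the role of the free variable $\gamma'$ in \eqref{eq:1}. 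Because the time component $dt$ of the intensity is uniform, the associated reciprocal characteristic reduces to $\chi_\nu$, depending only on jump sizes.

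For the direct implication, I would follow the first half of the proof of Theorem \ref{th:main}: disintegrate $\Pnu = \int \Pnuxy\, \lambda_\nu^x(dy)$ along the law $\lambda_\nu^x$ of $Z_1$ under $\Pnu$ started at $x$, and test the previously established identity \eqref{e3} against functionals of the form $f(Z_1)\, \Phi(\gamma,\gamma_1,\gamma_2,Z)$. Since $\ADP$ preserves the endpoint $Z_1$, the factor $f(Z_1)$ factors out of both sides and one extracts \eqref{e7} for $\lambda_\nu^x$-a.e.\ $y$, hence for every $y$ by the regularity of the family $y \mapsto \Pnuxy$.

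For the converse, I would mirror Propositions \ref{prop:1}--\ref{prop:3} one by one. First, conditioning on $\{Z_1 \in B(y,\eps)\}$ and letting $\eps \downarrow 0$, the bivariate Mecke formula \eqref{eq:Meckebivariate} for the Poisson point process on $\cXt$ yields that the diminished bridge $(\Pnuxy)^-$ is absolutely continuous with respect to $\Pnu$, with density proportional to $\nu(y - Z_1) / (\#\{t : \Delta Z_t \neq 0\} + 1)$. Second, starting from any $Q$ that satisfies \eqref{e7} and is concentrated on $\{Z_0 = x, Z_1 = y\}$, I would test \eqref{e7} against an auxiliary $\Phi$ built exactly as the $\tilde F$ of Proposition \ref{prop:2} so that the right-hand side telescopes, and then use Mecke's identity \eqref{eq:Mecke} for $\Pnu$ to recognise that $Q^-$ must carry the same density. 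Third, since $Q$ is supported on $\{Z_1 - Z_0 = y - x\}$, the size of any removed jump equals $(y - x)$ minus the increment of the remaining path, so the reconstruction argument of Proposition \ref{prop:3} recovers $Q$ from $Q^-$, giving $Q = \Pnuxy$. The loop case $x = y$ requires care because the constant path corresponds to the zero measure $\mu_Z = \underline{0}$, on which \eqref{e7} vanishes identically and therefore cannot quantify the weight $Q(\{Z \equiv x\})$; restricted to the complement $\{\#\{t : \Delta Z_t \neq 0\} \geq 1\}$ the three-step argument applies verbatim and delivers the announced equality.

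The main obstacle is the careful bookkeeping of the dictionary between the spatial state space $\cX$ treated in Section 2 and the space-time state space $\cXt$ used here: in particular, one must check that the uniform integration over $s_1, s_2 \in I$ built into $\ADP$ correctly substitutes for the unconstrained Lebesgue integration against $d\gamma'$ in identity \eqref{eq:1}, and that pinning only the spatial coordinate of the first moment does not affect the algebra. Once this dictionary is set, the arguments of Propositions \ref{prop:1}--\ref{prop:3} transfer mechanically, and the dichotomy $x \neq y$ versus $x = y$ inherits the dichotomy $\cc \neq 0$ versus $\cc = 0$ highlighted in the remark following Theorem \ref{th:main}.
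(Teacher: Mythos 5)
Your proposal is correct and follows essentially the same route as the paper: the direct implication by disintegrating $\Pnu$ over its endpoint, and the converse via the duality $Z \leftrightarrow \mu_Z$ identifying pinned bridges with point processes conditioned on their (spatial) first moment, then invoking Theorem \ref{th:main} — whose proof you simply unpack into its three constituent propositions. If anything, you are more explicit than the paper about the one genuine bookkeeping point (the uniform resampling of the time coordinates in $\ADP$ standing in for the free variable $\gamma'$ of identity \eqref{eq:1}), which the paper leaves implicit.
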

\begin{proof}
% The fact that $Q$ is in the reciprocal class of $\Pnu$ is equivalent to the fact that they have the same bridges.
To show that bridges of the process $\Pnu$ satisfy formula (\ref{e3}) is straightforward by disintegration of $\Pnu$ as mixture of its bridges.\\
To show that reciprocally, any pinned pure jump process $Q$ which satisfies \eqref{e7} coincides with a bridge of $\Pnu$, we exploit the following duality between bridges of pure jump processes and point processes pinned by their first moment: the point measure $\mu_{Z}$ has a fixed first moment $\B(\mu_{Z})=\cc$ if and only if the corresponding pure jump process has fixed initial and  final values $x$ and $y$ satisfying $x-y=\cc$. This together with Theorem \ref{th:main} leads to the conclusion.
\end{proof}

Since Identity \eqref{e7} is linear as a function of $Q$, and since the integrated bivariate function $\chi_{\nu}$ does not depend on the boundary conditions $x$ and $y$, \eqref{e7} eventually characterises the set of all mixtures of bridges $(\Pnuxy ,x,y\in\cX) $, called in the literature the {\em reciprocal class} associated with $\Pnu$, see e.g. \cite{LRZ}.\\

\vspace{3mm}

Approximating L\'evy processes whose L\'evy measure has an infinite mass by a sequence of compound Poisson processes, one obtains the following generalization of the previous theorem.
\begin{remark} \label{rem:JumpMeasInfinite}
Still if its diffuse jump measure is {\em infinite},
a pure jump  process $Q$  is in the reciprocal class of $\Pnu$ if and only if the identity  (\ref{e7}) holds for all continuous bounded test functions $\Phi$ on $\cXt^{3}\times\Omega$ as soon as $\chi_{\nu}$ is $\Camp_{Q}^{(2)}$-integrable. Indeed consider, for any $n \in \N$, the compound Poisson approximation $Z^{n}$ obtained from the initial L\'evy process $Z$ by canceling its jumps whose size is  smaller than $\frac{1}{n}:
Z^{n}_{t}:=Z_{t}\cdot \IND_{|\Delta Z_{t}|>\frac{1}{n}}.
$
Its L\'evy measure is now finite, given by
$
\nu^{n}(dx)=\nu^{n}(x) \, dx:=\nu(x)\IND_{|x|>\frac{1}{n}}\, dx.
$
and Identity (\ref{e3}) holds under $\Pnun$. Applying it  to the
cut-off functions
$
F^{n}(\gamma, \gamma_{1}, \gamma_{2}, Z):= \Phi(\gamma, \gamma_{1}, \gamma_{2}, Z) \IND_{|\Delta Z_{t}|>\frac{1}{n}}
$
where $\Phi$ is any continuous bounded test function, leads to an identity which converges towards  (\ref{e7}) when $n$ grows.
\end{remark}

%%%%%%%%%%%%%%%%%%%%%%%%%%%%%%%%%%%%%%%%%%%%%%%%%%%%%%%%%%%%%
\subsection{Sampling a L\'evy bridge}\label{subs:simulation}
%%%%%%%%%%%%%%%%%%%%%%%%%%%%%%%%%%%%%%%%%%%%%%%%%%%%%%%%%%%%%

In this subsection we describe heuristically how to construct a sampler for a L\'evy bridge. Indeed, the basic idea is to construct a dynamic on the pure jump path space whose stationary measure would be the law of  a L\'evy bridge. This generalizes to jump processes some of the results presented in \cite{hairer2005analysis,hairer2007analysis} for diffusion processes. \\

Consider a functional $\Phi$ of the form
\[
\Phi((t,\Delta Z_t),\gamma_1,\gamma_2,Z ) = \left[F(Z) - F(Z + \Delta Z_t \mathbf{1}_{[t,1]} - x_1 \mathbf{1}_{[s_1,1]} - x_2 \mathbf{1}_{[s_2,1]})\right]
\varphi(x_1), \]
where the test functional $F$ is bounded measurable, the density function $\varphi$ is rapidly decaying at infinity and as before, $\gamma_{1}:=(s_{1},x_{1}) $ and $\gamma_{2}:=(s_{2},x_2)$.
 Equation \eqref{e7}, satisfied by any bridge of $\Pnu$,  rewrites for such $\Phi$ as
\begin{eqnarray*}
&&\mathbb{E}_{\Pnuxy} \Bigg[ \sum_{t: \Delta Z_t \neq 0} \int_{I^2 \times \Gamma}
\left[F(Z-\Delta Z_t \mathbf{1}_{[t,1]} + x_1 \mathbf{1}_{[s_1,1]} + x_2 \mathbf{1}_{[s_2,1]}) )-F(Z)\right] \varphi(x_1) d x_1 d s_1 d s_2 \\
&&+  \sum_{\substack{s_{1}\neq s_{2}: \\ \Delta Z_{s_{1}}\Delta Z_{s_{2}}\not = 0}}
\varphi(\Delta Z_{s_{1}})\chi_{\nu}(\Delta Z_{s_{1}},\Delta Z_{s_{2}})\\
&& \quad \quad  \int_{I} \big[F(Z +(x_1+x_2)\mathbf{1}_{[t,1]}- x_1 \mathbf{1}_{[s_1,1]}-x_2 \mathbf{1}_{[s_2,1]}) -F(Z)\big] dt \Bigg]=0.
\end{eqnarray*}
This identity suggests that the bridges $\Pnuxy$ can be interpreted as the invariant law of a Markov process on the path space regulated
by two mechanism: either jumps  split/fragmentate (first term) or jumps coalesce (second term). More precisely, if $Z$ is the current state of the process, then
\begin{itemize}
\item Each jump $(t,Z_t)$ of the path $Z$ splits at rate $1$; when this happens, the jump at $t$ is removed, and is replaced by two new jumps $(s_1,x_1)$ and $(s_2,x_2)$ which are sampled according to the following rules.
\begin{itemize}
\item The jump times $s_1,s_2$ are chosen uniformly at random in $[0,1]^2$.
\item The first jump size $x_1$ is sampled from the probability law with density $\varphi$ and the second jump size is set to be $x_2:= \Delta Z_t - x_1 $

\end{itemize}
\item Each ordered pair $(s_1,\Delta Z_{s_1}),(s_2,\Delta Z_{s_2})$ of jumps of $Z$ coalesce at rate $\varphi(Z_{s_1})\chi_{\nu}(Z_{s_1},Z_{s_2})$; when this happens, the two jumps are removed from  $Z$ and replaced by a single jump $(t,\Delta Z_t)$ sampled according to the following rules.
\begin{itemize}
\item The jump time $t$ is sampled uniformly at random in $[0,1]$.
\item The jump size is the sum of the sizes of the removed jumps: \\
\mbox{$\Delta Z_t := \Delta Z_{s_1}+ \Delta Z_{s_2}$}
\end{itemize}
\end{itemize}

%%%%%%%%%%%%%%%%%%%%%%%%%%%%%%%%%%%%%%%%%%%%%%%%%%%%%%%%%%%%%%%%%%%%%
\subsection{Stochastic comparison between L\'evy bridges} \label{sec:compLevyBridges}
%%%%%%%%%%%%%%%%%%%%%%%%%%%%%%%%%%%%%%%%%%%%%%%%%%%%%%%%%%%%%%%%%%%%%

In this section we apply the above results to investigate domination properties for bridges of pure jump L\'evy processes.\\
We consider L\'evy measures having the form $\nu(dx)=\lambda \  f(x)dx$ where the constant $\lambda>0$ encodes the intensity of the jumps per unit interval, and the function $f$ on $\R$ is a probability density encoding the distribution of the jumps. It then corresponds to the assumption made in the beginning of Section \ref{sec:2.2}.
Thus, supposing the density $f$ to be positive, domination conditions (\ref{eq:cconvup}), respectively (\ref{eq:cconvlow}), rewrite:
\begin{equation} \label{inequ:conv}
% \nonumber % Remove numbering (before each equation)
 \exists\,  K < \infty, \,
 \sup_{x \in \R} \frac{f * f(x)}{f(x)}\leq K , \quad
 \textrm{ resp. }
 \quad \exists \, k >0, \,
 \inf_{x \in \R} \frac{f* f(x)}{f(x)}\geq k .
\end{equation}
Our aim is to compare the law of the number of jumps of a L\'evy bridge with a Poisson distribution.
We consider two specific families of L\'evy bridges:
their L\'evy measures  are of Cauchy-type with densities $f_\alpha$ or of symmetric exponential-type with densities $g_\beta$, where :
$$
f_{\alpha}(y):= \frac{r_{\hspace{-0.3mm}\alpha}}{1+|y|^{\alpha}}, \quad \alpha >1, \quad \text{and}\quad
g_{\beta}(y):= r_{\hspace{-0.3mm}\beta}e^{-|y|^{\beta}},\quad \beta>0.
$$
Here $r_{\hspace{-0.3mm}\alpha}>0, r_{\hspace{-0.3mm}\beta} >0$ denote the normalising constants. \\

%%%%%%%%%%%%%%%%%%%%%%%%%%%%%%%%%%%%%%%%%%%%%%%%%%%%%%%%%%%
\textbf{Stochastic comparison for the Cauchy-type family}.\\
%%%%%%%%%%%%%%%%%%%%%%%%%%%%%%%%%%%%%%%%%%%%%%%%%%%%%%%%%%%%
We now prove that both inequalities in \eqref{inequ:conv} are satisfied by this family of jump laws or equivalently, we prove that the function $H_{\alpha}(x):= \displaystyle \frac{f_\alpha * f_\alpha}{f_\alpha}(x)$ is uniformly bounded from above and from below (by a positive constant). First notice the integral representation:
$
H_{\alpha}(x)= \displaystyle \int_{\R}h_{\alpha}(x,y)\,dy,
$
with
$$
h_{\alpha}(x,y):= \displaystyle \frac{1+|x|^{\alpha}}{(1+|y+\frac{x}{2}|^{\alpha})(1+|y-\frac{x}{2}|^{\alpha})}.
$$
Since the function $h_{\alpha}$ is symmetric in $x$ and $y$, it is enough to consider $h_{\alpha}(x,y)$ for $x>0, y>0.$ \\
%We distinguish between two cases: $x > 1$ and $x\leq 1.$\\
{\it Upper bound.} Since $H_\alpha$ is continuous it is bounded from above on the interval $[0,1]$.
 So let us consider $x\in [1,+\infty[.$
We decompose $H_\alpha(x)$ into two integrals:
$$
\frac{1}{2} H_{\alpha}(x) = \int_{0}^{x/2} h_{\alpha}(x,y)\,dy + \int_{x/2}^{+\infty} h_{\alpha}(x,y)\,dy.
$$
Now
$$
\int_{0}^{x/2} h_{\alpha}(x,y)\,dy\leq \frac{1+x^{\alpha}}{1+(x/2)^{\alpha}}\int_{0}^{x/2}\frac{dz}{1+z^{\alpha}}
< \frac{2^{\alpha}(1+x^{\alpha})}{2^{\alpha}+x^{\alpha}}\int_{0}^{+\infty}\frac{dz}{1+z^{\alpha}}
$$
which is uniformly bounded for $x \in [1,+\infty[$ since $\alpha$ is supposed to be larger than $1$.
Similarly
\begin{eqnarray*}
\int_{x/2}^{+\infty} h_{\alpha}(x,y)\,dy
&\leq& \frac{1+x^{\alpha}}{1+(x/2)^{\alpha}}\int_{x/2}^{+\infty} \frac{1}{1+(y-x/2)^{\alpha}}dy\\
&=& \frac{2^{\alpha}(1+x^{\alpha})}{2^{\alpha}+x^{\alpha}}\int_{0}^{+\infty}\frac{dz}{1+z^{\alpha}}
\end{eqnarray*}
which is uniformly bounded for $x \in [1,+\infty]$.\\
{\it Lower bound.} As before, it is enough to consider $x\in [1,+\infty[.$
$$
\frac{1}{2} H_{\alpha}(x) \geq
\int_{0}^{x/2} h_{\alpha}(x,y)\,dy \geq \frac{1+x^{\alpha}}{1+x^{\alpha}}\int_{0}^{x/2}\frac{dz}{1+z^{\alpha}}
\geq \int_{0}^{1/2}\frac{dz}{1+z^{\alpha}} >0
$$
Hereby we have shown that $\nu_\alpha= \lambda \  f_\alpha(x)dx$ satisfies  both inequalities \eqref{eq:cconvup} and \eqref{eq:cconvlow} for some constants $K_\alpha$ and $k_\alpha.$
 Due to Proposition \ref{prop:comp} we conclude that the distribution of the number of jumps for any bridge of a  L\'evy process with Cauchy-type jump distribution, conditioned to have at least one jump, is stochastically equivalent with a Poisson law conditioned to stay positive.\\
For $\alpha=2$ the law of the jumps is a Cauchy distribution with density $\displaystyle f_{2}(y)=\frac{1}{\pi(1+y^{2})}$. Thus $f_2*f_2(y)=\frac{2}{\pi(4+y^{2})}$ and we obtain the explicit bounds: $\frac{1}{2} \leq H_{\alpha}(x) \leq 2$. Therefore, as application of  Proposition \ref{prop:comp} and  Corollary \ref{cor:comp_exp}, the following holds.
\begin{proposition}
The distribution of the number of jumps for any bridge of a L\'evy process with Cauchy jump distribution, supposing it is larger than 0, is stochastically dominated by (resp. dominates) a Poisson law with parameter $4\lambda$ (resp. $\lambda$) conditioned to stay positive.
 Therefore its expected number belongs to  $[\frac{\lambda}{1-e^{-\lambda}},\frac{4\lambda}{1-e^{-4\lambda}}].$ For $\lambda = 1$, this interval is equal to $[1.58;4.07]$.
\end{proposition}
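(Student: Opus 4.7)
The plan is first to specialize the general analysis already carried out for $f_\alpha$ to the case $\alpha=2$, where everything becomes fully explicit thanks to the stability of the Cauchy family under convolution. Since $f_2(y)=\frac{1}{\pi(1+y^2)}$ is the standard Cauchy density, the density of the sum of two independent Cauchy$(0,1)$ variables is the Cauchy$(0,2)$ density, so
\[
f_2 \ast f_2 (y) \;=\; \frac{2}{\pi(4+y^2)}.
\]

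Next I would compute explicitly the ratio $H_2(x) = \frac{f_2 \ast f_2(x)}{f_2(x)} = \frac{2(1+x^2)}{4+x^2}$ and observe by a one-line algebraic check that
\[
\tfrac{1}{2}\;\leq\; H_2(x) \;\leq\; 2 \qquad \text{for all } x\in\R,
\]
the upper inequality reducing to $2(1+x^2)\leq 2(4+x^2)$ and the lower one to $4(1+x^2)\geq 4+x^2$.

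With the Cauchy L\'evy measure $\nu(dx)=\lambda f_2(x)\,dx$ we have $\nu\ast\nu(x)/\nu(x)=\lambda H_2(x)$, so conditions \eqref{eq:cconvup} and \eqref{eq:cconvlow} hold with the explicit constants $K=2\lambda$ and $k=\lambda/2$. At this point I would invoke the duality between L\'evy bridges and Poisson point processes pinned by their first moment provided by Theorem~\ref{thm:reciproc_of_levy_fin}, which allows to transpose Proposition~\ref{prop:comp}: conditioned on carrying at least one jump, the jump count under any Cauchy bridge $\Pnuxy$ is stochastically dominated by $\poi^+_{2K}=\poi^+_{4\lambda}$ and stochastically dominates $\poi^+_{2k}=\poi^+_{\lambda}$.

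Finally, applying Corollary~\ref{cor:comp_exp} with the same constants yields
\[
\frac{\lambda}{1-e^{-\lambda}} \;\leq\; \mathbb{E}_{\Pnuxy}\bigl[\#\{t:\Delta Z_t \neq 0\} \,\big|\, \#\{t:\Delta Z_t \neq 0\}\geq 1\bigr] \;\leq\; \frac{4\lambda}{1-e^{-4\lambda}},
\]
and a direct numerical evaluation at $\lambda=1$ produces the interval $[1.58,\,4.07]$. There is no genuine obstacle in this argument: the only mildly delicate point is that Proposition~\ref{prop:comp} is stated for point processes on $\R$ whereas L\'evy bridges correspond to pinned point processes on the space–time space $I\times\R$, but the splitting operator $\ADP$ used in Section~\ref{sec:3} already acts uniformly in the time component (times are resampled uniformly on $I^2$), so the convolution condition effectively concerns only the jump–size marginal $\nu$, which is exactly what has been verified above.
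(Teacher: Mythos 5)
Your proposal is correct and follows essentially the same route as the paper: it uses the convolution stability of the Cauchy family to get $f_2\ast f_2(y)=\frac{2}{\pi(4+y^2)}$, the explicit bounds $\frac12\leq H_2\leq 2$ (hence $K=2\lambda$, $k=\lambda/2$), and then Proposition~\ref{prop:comp} and Corollary~\ref{cor:comp_exp} via the bridge/pinned-point-process duality of Theorem~\ref{thm:reciproc_of_levy_fin}. Your closing remark about the time component being integrated out is a correct and slightly more explicit justification of a step the paper leaves implicit.
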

Notice once more that these comparisons do not depend on the height of the bridge, as soon as it differs from 0. \\

\textbf{Stochastic comparison for the symmetric exponential-type family}.
%%%%%%%%%%%%%%%%%%%%%%%%%%%%%%%%%%%%%%%%%%%%%%%%%%%%%%%%%%%%%%%%%%%%%%%%%
We now prove that (only) the second inequality in \eqref{inequ:conv} is satisfied by the family of jump densities $g_\beta$ or equivalently, we prove that the function $G_{\beta}(x):= \displaystyle \frac{g_\beta * g_\beta}{g_\beta}(x)$ is uniformly bounded from below by a positive constant. First notice the integral representation:
\begin{eqnarray*}
G_{\beta}(x) =  e^{|x|^{\beta}}\int_{\R}\tilde{\gbeta}(x,y) \, dy,
\end{eqnarray*}
 where $\tilde{\gbeta}(x,y):=e^{-|y+x/2|^{\beta}}e^{-|y-x/2|^{\beta}}$.
% because of the different behaviour of the map $ y \mapsto \tilde{\gbeta}(x,y)$ for fixed $x$.
Remark that the function $\tilde{\gbeta}(x,y)$ is even in $y$ and symmetric in $x.$\\
{\em First case}: $0<\beta<1.$ The graph of $y \mapsto \tilde{\gbeta}(x,y)$ is bimodal for $x\neq 0$ and becomes unimodal for $x=0.$ The value  of $G_{\beta}$ at $x=0$ is $G_{\beta}(0) = \frac{2}{\beta 2^{1/\beta}}\Gamma(1/\beta).$ On the compact interval $[-1/2, 1/2]$,  the continuous map  $G_{\beta}$ is bounded from below by a positive constant.
%  we will treat $G_{\beta}(x)$ differently for small $x$, say for $|x|<1/2$ and for the other values of $x.$ We obtain that, for $|x|<1/2$,
% $$
% \inf_{|x|<1/2} G_{\beta}(x)\geq e^{|x|^{\beta}}\Big[\int_{y>0}e^{-y^{\beta}}e^{-(y+1/2)^{\beta}}\,dy + \int_{y<0}e^{-(-y)^{\beta}}e^{-(-y+1/2)^{\beta}}\,dy\Big]\geq C_{\beta},
% $$
% for a constant $C_{\beta}>0.$
For $|x|>1/2$ one can inscribe under the graph of $y \mapsto \tilde{\gbeta}(x,y)$ equal triangles with vertices at $A_{+}:=(x/2, e^{-|x|^{\beta}})$ and $A_{-}:=(-x/2, e^{-|x|^{\beta}})$, having as  sides the tangents at each of the vertices $A_{+}, A_{-}$ and with height $h=e^{-|x|^{\beta}}.$ Then
$$
\inf_{|x|>1/2}G_{\beta}(x)\geq \inf_{|x|>1/2} e^{|x|^{\beta}}2e^{-|x|^{\beta}}(\frac{|x|^{1-\beta}}{\beta}+\frac{|x|}{2})\geq \frac{2^{\beta}}{\beta }+\frac{1}{2}.
$$
 This shows that the function $G_{\beta}$ is uniformly bounded from below by a positive constant. \\
{\em Second case}: $\beta\geq 1.$
The graph of  $y \mapsto \tilde{\gbeta}(x,y)$  becomes unimodal, and since the function is symmetric we consider only the case $x>0.$ The unique maximum of this function is at the point $x/2.$ Analysing the integrals over $[0,x/2]$ and $[x/2,+\infty)$ for $0\leq x \leq1$ and $x>1$ and using the respective asymptotical behaviour of the incomplete Gamma function, we get that $G_{\beta}(x)$ is again uniformly bounded from below by a positive constant $k_\beta$.
Due to Proposition \ref{prop:comp} we conclude that the distribution of the number of jumps for any bridge of a  L\'evy process with symmetric exponential-type jump distribution, conditioned to have at least one jump, stochastically dominates a Poisson law with parameter $2\lambda \, k_\beta $ conditioned to stay positive.\\
For $\beta=1$ the law of the jumps is a Laplace distribution with density
\mbox{$g_{1}(y)=e^{-|y|}/2$}. We compute explicitly $G_1(x) = \frac{1}{2} (1 + |x|)$ and obtain as lower bound  $k_1= 1/2  $. \\
For  $\beta=2$ the law of the jumps is the standard normal distribution with density $g_{2}(y)=e^{-y^{2}}/\sqrt{\pi}$. We compute explicitly $G_2(x) = \frac{1}{\sqrt{2}} e^{x^2}$ and obtain as lower bound $k_2= 1/\sqrt{2}$.\\
Thus,  as application of  Proposition \ref{prop:comp} and  Corollary \ref{cor:comp_exp}, the following holds.
\begin{proposition}
The distribution of the number of jumps
for any bridge of a L\'evy process with
 Laplace (resp. standard Gaussian) jump distribution, conditioned to have at least one jump,
stochastically dominates  a Poisson law with parameter $\lambda$ (resp. $ \sqrt{2} \lambda$) conditioned to stay positive.
 Therefore its expected number is not less  than $\lambda/(1-e^{-\lambda})$ (resp. $ \sqrt{2} \lambda(1-e^{- \sqrt{2} \lambda})$). For $\lambda = 1$ these bounds are equal to $1.58$  (resp. $1.07$).
\end{proposition}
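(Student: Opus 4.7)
My plan is to obtain this proposition as a direct specialization of Proposition \ref{prop:comp}(2) and Corollary \ref{cor:comp_exp}(2) to the two sub-families $\beta=1$ (Laplace) and $\beta=2$ (standard Gaussian), using the explicit convolution identities $G_1(x) = (1+|x|)/2$ and $G_2(x) = e^{x^2}/\sqrt{2}$ established just above the statement. First, I would translate the lower bounds $G_\beta \geq k_\beta$ on the probability density $g_\beta$ into condition \eqref{eq:cconvlow} for the L\'evy intensity density $\rho(x) = \lambda g_\beta(x)$: since
\[
\rho \ast \rho(x) \;=\; \lambda^2 (g_\beta \ast g_\beta)(x) \;\geq\; \lambda^2 k_\beta\, g_\beta(x) \;=\; (\lambda k_\beta)\, \rho(x),
\]
condition \eqref{eq:cconvlow} holds with constant $k = \lambda k_\beta$, which reads $k = \lambda/2$ in the Laplace case (since $k_1 = 1/2$) and $k = \lambda/\sqrt{2}$ in the Gaussian case (since $k_2 = 1/\sqrt{2}$).

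Second, I would invoke the space--time duality of Theorem \ref{thm:reciproc_of_levy_fin}, which identifies the random number of jumps of a L\'evy bridge $\Pnuxy$ with the total number of atoms $\mu(\cXt)$ of the pinned Poisson point process $\Prhoc$ on space--time, with height $\cc = y-x$. Proposition \ref{prop:comp}(2) then directly yields the stochastic domination from below by $\poi^+_{2k}$, i.e.\ by $\poi^+_{\lambda}$ in the Laplace case and by $\poi^+_{\sqrt{2}\lambda}$ in the Gaussian case. The hypothesis ``conditioned to have at least one jump'' is automatic when $\cc \neq 0$ and corresponds to conditioning on $\{\underline 0\}^c$ when $\cc = 0$, which is exactly the form in which Proposition \ref{prop:comp}(2) is invoked.

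Third, I would apply Corollary \ref{cor:comp_exp}(2) to convert this stochastic lower bound into the expectation lower bound $2k/(1-e^{-2k})$; plugging in $k = \lambda/2$ and $k = \lambda/\sqrt{2}$ yields the two closed-form expressions claimed in the statement, and setting $\lambda = 1$ produces the numerical values. Since the nontrivial analytic work, namely the computation of the two infima $k_1$ and $k_2$ of $G_1$ and $G_2$, has already been carried out in the preceding exposition, I do not anticipate any real obstacle here; the only point requiring attention is the bookkeeping of the factor $\lambda$ when passing from the probability density $g_\beta$ to the intensity density $\lambda g_\beta$, which is precisely what distinguishes $k$ from $k_\beta$.
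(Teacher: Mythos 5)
Your proposal is correct and follows essentially the same route as the paper: the lower bounds $k_1=1/2$ and $k_2=1/\sqrt{2}$ on $G_\beta$ computed just before the statement, the bookkeeping $k=\lambda k_\beta$ in condition \eqref{eq:cconvlow} for the intensity $\lambda g_\beta$, the space--time identification of jumps of the bridge with atoms of the pinned point process, and then Proposition \ref{prop:comp}(2) and Corollary \ref{cor:comp_exp}(2) giving domination by $\poi^+_{2\lambda k_\beta}$. One small caveat on your last step: Corollary \ref{cor:comp_exp}(2) yields the quotient $\sqrt{2}\lambda/(1-e^{-\sqrt{2}\lambda})\approx 1.87$ for $\lambda=1$ in the Gaussian case, whereas the printed value $1.07$ is the product $\sqrt{2}(1-e^{-\sqrt{2}})$ --- a typo in the statement that your (correct) derivation would not reproduce.
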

Once more, it is remarkable that these bounds do not dependent of the height of the bridge.

%%%%%%%%%%%%%%%%%%%%%%%%%%%%%%%%%%%%%%%%%%%%
\section{Periodic Ornstein-Uhlenbeck processes}
\label{sec:4}
%%%%%%%%%%%%%%%%%%%%%%%%%%%%%%%%%%%%%%%%%%%%
%%%%%%%%%%%%%%%%%%%%%%%%%%%%%%

We now generalize some result of the previous section to the case of linear diffusion driven by a L\'evy process. Introducing a damping force in the random dynamics, we
consider the real-valued Langevin equation with damping parameter $c\in \R^*$
\begin{equation} \label{OUwithLevy}
dX_t = - c X_t \, dt + dZ_t, \quad t \in [0,1]
\end{equation}
where $Z$ is the L\'evy process with law $\Pnu$.  The measure $\nu$ is as before a diffuse finite L\'evy measure on $\R^*$.
Suppose moreover that the solution of this SDE is periodized, that is satisfies the boundary conditions $X_0=X_1$. This process, studied in \cite{PED02,PEDSATO}, is called periodic Ornstein-Uhlenbeck with parameter $c$ and background driving L\'evy process $Z$, short PerOU-L\'evy process. We denote its law  by $\PerOU$. \\
Notice that, if one replaces in \eqref{OUwithLevy} the pure jump process $Z$  by a Brownian motion, one recovers the known  periodic Ornstein-Uhlenbeck process, whose properties as a mixture of Ornstein-Uhlenbeck bridges  are discussed in \cite[Theorem 5.1]{RT02}.   A review of its semi-martingale properties can be found in \cite{roelly2016convoluted}.\\

 Indeed the periodic solution of  \eqref{OUwithLevy} is the image measure of $\Pnu$ under  the map $\mathbb{X}^c: \mathbb{D}([0,1];\R) \rightarrow \mathbb{D}([0,1];\R)$ given by:
\begin{eqnarray*}
\mathbb{X}^c_0(Z) &=& \mathbb{X}^c_1(Z) = \frac{1}{e^c-1} \int_{0}^1 e^{c s} \, d Z_s, \\
\mathbb{X}^c_t(Z)  &=& e^{-ct} \, \mathbb{X}^c_0(Z) + e^{-ct} \int_{0}^{t} e^{cs}\, d Z_s.
\end{eqnarray*}

We would like to exhibit an identity generalizing \eqref{e7} satisfied by the PerOU-L\'evy process.
%%%%%%%%%%%%%%%%%%%%%%%%%%%%%%%%%%%%%%%%%%%%%%%%%%%%%%%%%%%
To this aim we generalize the former operators given  in Definitions \ref{def:Theta} and \ref{def:ADP} and introduce new time-weighted operators $\Theta^c_{\gamma,\gamma_1,\gamma_2} $ and $\ADPa$ which take into account the $c$-damping of the paths. They are defined by composing $\Theta_{\gamma,\gamma_1,\gamma_2} $ and $\ADP$ with  the map $\mathbb{X}^c$. More precisely, for any $\gamma=(t , \Delta Z_{t})\in [0,1]\times \R^*$ jump time and jump size of $Z$, any $\gamma_1=(s_1,x_1) \in [0,1]\times \R^*$ and $\gamma_2=(s_2,x_2) \in [0,1]\times \R^*$, we introduce the {\em time-weighted path jump splitting} by
\bes
\Theta^c_{\gamma,\gamma_1,\gamma_2}(Z) = Z + \mathbb{X}^c(  - \Delta Z_{t} \IND_{[t,1]} + x_1 \IND_{[s_1,1]} + x_2 \IND_{[s_2,1]} ).
\ees
Randomizing time and size of the new jumps, one define the following operator on positive test functions $F$ defined on $([0,1]\times \R^*)^{3}\times\Omega$:
\bes
\ADPa F \, (Z ):= \int_{([0,1]\times \R^*)^{3}}  F(\gamma,\gamma_{1},\gamma_{2},\Theta^c_{\gamma,\gamma_1,\gamma_2} Z) \,
% \sigma(x_{1})
dx_{1}\delta_{\Delta Z_{t}-x_{1}}(dx_2) ds_{1} ds_{2}\, \mu_Z (d\gamma).
\ees

%%%%%%%%%%%%%%%%%%%%%%%%%%%%%%%%%%%%%%%%%%%%%%%%%%%%%%%%%%%
\begin{proposition}\label{prop:OUperiodic}
Let $\PerOU$ be the periodic Ornstein-Uhlenbeck process with parameter $c$ driven by the L\'evy process $Z$ with L\'evy measure $\nu$. It satisfies the following identity for any positive test functions
$F $:
\begin{eqnarray}\label{eOU2}
&&E_{\PerOU} \left[ \ADPa F\right] = \\
&& E_{\PerOU} \bigg[\sum_{\substack{s_{1}\neq s_{2}:\Delta Z_{s_{1}}\not = 0 \\ \Delta Z_{s_{2}}\not = 0}}
\chi_{\nu}(\Delta Z_{s_{1}},\Delta Z_{s_{2}})
\int_{0}^1
F\big((t, \Delta Z_{s_{1}}+\Delta Z_{s_{2}}),(s_{1},\Delta Z_{s_{1}}),(s_{2},\Delta Z_{s_{2}}),Z\big)dt \bigg] \nonumber
\end{eqnarray}
where $\chi_{\nu}$ is the reciprocal characteristic associated with the measure $\nu$, given as before by \eqref{eq:loopcharact}.
\end{proposition}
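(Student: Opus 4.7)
The plan is to reduce identity \eqref{eOU2} to the analogous identity \eqref{e3} for the driving Lévy process, by exploiting the fact that $\PerOU = \Pnu \circ (\mathbb{X}^c)^{-1}$ and that the map $\mathbb{X}^c$ is linear in $Z$.

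First I would record two elementary properties of $\mathbb{X}^c$ used throughout. (i) Because $\mathbb{X}^c_t(Z) = e^{-ct}\mathbb{X}^c_0(Z) + e^{-ct}\int_0^t e^{cs}\,dZ_s$ has a finite-variation continuous drift plus a stochastic integral against $Z$, its jumps are exactly those of the stochastic integral, namely $\Delta \mathbb{X}^c_t(Z) = e^{-ct}\,e^{ct}\,\Delta Z_t = \Delta Z_t$. In particular, the jump point measures coincide, $\mu_{\mathbb{X}^c(Z)} = \mu_Z$, so sums and integrals over jumps of $\mathbb{X}^c(Z)$ may be replaced by the same expressions for $Z$. (ii) Since $Z \mapsto \mathbb{X}^c(Z)$ is linear, for any càdlàg finite-variation perturbation $h$ one has $\mathbb{X}^c(Z+h) = \mathbb{X}^c(Z) + \mathbb{X}^c(h)$. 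Applied to $h = -\Delta Z_t\IND_{[t,1]} + x_1\IND_{[s_1,1]} + x_2\IND_{[s_2,1]}$, this yields the key commutation identity
\[
\Theta^c_{\gamma,\gamma_1,\gamma_2}\bigl(\mathbb{X}^c Z\bigr) \;=\; \mathbb{X}^c\bigl(\Theta_{\gamma,\gamma_1,\gamma_2} Z\bigr),
\]
where $\Theta$ is the untwisted path jump splitting of Definition \ref{def:Theta} acting on the driving Lévy path.

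Next, given a positive test functional $F$ on $([0,1]\times\R^*)^3 \times \Omega$, I would introduce the companion functional
\[
G(\gamma,\gamma_1,\gamma_2,Z) \;:=\; F\bigl(\gamma,\gamma_1,\gamma_2,\mathbb{X}^c Z\bigr)
\]
and unfold $E_{\PerOU}[\ADPa F]$ by pushing back through $\mathbb{X}^c$. Using (i) to rewrite $\mu_{\mathbb{X}^c Z}(d\gamma)$ and $\delta_{\Delta(\mathbb{X}^c Z)_t - x_1}(dx_2)$ as $\mu_Z(d\gamma)$ and $\delta_{\Delta Z_t - x_1}(dx_2)$, and using (ii) to replace the path argument $\Theta^c_{\gamma,\gamma_1,\gamma_2}(\mathbb{X}^c Z)$ by $\mathbb{X}^c(\Theta_{\gamma,\gamma_1,\gamma_2} Z)$, one obtains
\[
E_{\PerOU}\bigl[\ADPa F\bigr] \;=\; E_{\Pnu}\bigl[\ADP G\bigr].
\]

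Identity \eqref{e3} applied to $G$ then expresses the right-hand side as the second-order factorial sum over jumps of $Z$ weighted by $\chi_\nu$. Pushing forward once more through $\mathbb{X}^c$, and invoking (i) a final time so that the jump times and sizes of $Z$ are identified with those of $X := \mathbb{X}^c(Z)$ (and $\Delta Z_{s_1}+\Delta Z_{s_2} = \Delta X_{s_1}+\Delta X_{s_2}$ inside $\chi_\nu$ and inside $F$), one recovers exactly the right-hand side of \eqref{eOU2}. The only non-routine step is the path-level commutation in (ii); everything else is transport along the deterministic linear map $\mathbb{X}^c$, so no further reciprocal characteristic appears and no hypothesis on $c$ beyond $c \neq 0$ is needed.
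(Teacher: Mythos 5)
Your proposal is correct and follows essentially the same route as the paper: pushing $E_{\PerOU}[\ADPa F]$ back through $\mathbb{X}^c$ via its linearity (your commutation identity for $\Theta^c$) and the equality of jump measures $\mu_{\mathbb{X}^c(Z)}=\mu_Z$, applying \eqref{e3} to $G = F(\cdot,\cdot,\cdot,\mathbb{X}^c(\,\cdot\,))$, and transporting back. Your write-up simply makes explicit the two facts the paper invokes more tersely.
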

%%%%%%%%%%%%%%%%%%%%%%%%%%%%%%%%%%%%%%%%%%%%%%%%%%%%%%%%%%%
\begin{proof}
By using the linearity of the map $\mathbb{X}^c$
\bes
E_{\PerOU} \left[ \ADPa F\right] = \Enu \left[ \ADP (F \circ  \mathbb{X}^c)  \right] .
\ees
Applying identity \eqref{e3} under $\Pnu$,
\begin{eqnarray*}
&&
\Enu \left[ \ADP (F \circ  \mathbb{X}^c)  \right] =\\
&& \Enu \bigg[\sum_{\substack{s_{1}\neq s_{2}:\Delta Z_{s_{1}}\not = 0 \\ \Delta Z_{s_{2}}\not = 0}}
\int_{I}
F((t, \Delta Z_{s_{1}}+\Delta Z_{s_{2}}),(s_{1},\Delta Z_{s_{1}}),(s_{2},\Delta Z_{s_{2}}),\mathbb{X}^c(Z))dt\, \chi_{\nu}(\Delta Z_{s_{1}},\Delta Z_{s_{2}})\bigg]
\end{eqnarray*}
Since $\mu_{\mathbb{X}^c} = \mu_{Z}$ we can rewrite the right hand side of the previous
equation as the right hand side of \eqref{eOU2}.
\end{proof}
%%%%%%%%%%%%%%%%%%%%%
\bibliographystyle{plain}
\bibliography{Ref1}
\end{document}